\documentclass[11pt]{article}
\usepackage{amsmath,amssymb,amsthm}

\topmargin=  -1.0cm
\oddsidemargin=-0.5cm
\evensidemargin=-0.5cm
\textheight=24.50cm
\textwidth=17.2cm

\setlength{\baselineskip}{16pt}

\newtheorem{thm}{\noindent Theorem}[section]
\newtheorem{lem}{\noindent Lemma}[section]
\newtheorem{cor}{\noindent Corollary}[section]
\newtheorem{prop}{\noindent Proposition}[section]
%[section]

\newtheorem{rem}{\noindent Remark}[section]

\begin{document}
\title{Frequently visited sites of the inner boundary of simple random walk range}
\author{Izumi Okada}
\date{\empty}
\maketitle

\begin{abstract}
This paper considers the question: 
how many times does a simple random walk revisit  the most frequently visited site among  the  inner boundary points?
It is known that in ${\mathbb{Z}}^2$, the number of visits to the most frequently visited site among all of the points of the random walk range up to time $n$ is asymptotic to $\pi^{-1}(\log n)^2$, while in ${\mathbb{Z}}^d$ $(d\ge3)$, it is of order $\log n$. 
We prove that the corresponding number for the inner boundary is asymptotic to $\beta_d\log n$ for any $d\ge2$, where  $\beta_d$ is a certain constant having a simple probabilistic expression.
\end{abstract}

\section{Introduction }
Many works have studied properties of  the trajectory of a simple random walk. 
These properties include the growth rate of  the trajectory's range, location of  the most frequently visited site, the number of the visits to this site,  
the number of  the sites of frequent visits, and so forth.  
There remain many interesting unsolved questions concerning  these properties. 
The most frequently visited site among all the points of the range (of the walk of finite length) is called  a  favorite site 
and a site which is revisited many times (in a certain specified sense) is called a frequently visited site. 
About fifty years ago, Erd\H{o}s and Taylor \cite{dvo1} posed a problem concerning a simple random walk in ${\mathbb{Z}^{d}}$: 
how many times does the random walk revisit the favorite site (up to a specific time)?  
Open problems concerning the favorite site are raised by Erd\H{o}s and R\'ev\'esz \cite{er2}, \cite{er3} and Shi  and T\'oth \cite{shi} but remain unsolved so far. 
By Lifshits and Shi \cite{shi2} it is known that 
the favorite site of a $1$-dimensional  random walk tends to be far from the origin, 
but almost nothing is known about its location for multi-dimensional walks. 
 
In this paper, we focus on the most frequently visited site  among   the inner boundary points  of the random walk range, rather than among all of the points of the range, and propose the  question:  
how many times does a random walk revisit the most frequently visited site  among the inner boundary points?  
Here, we briefly state our result and compare it with known results for the favorite site.  
Let $M_0(n)$ be the number of visits  to the favorite site by the walk up to time $n$ and 
$M(n)$ be that of the most frequently visited site among the inner boundary points. 
In Theorem \ref{m1}, we will prove that 
for $d\ge2$
\begin{align*}
\lim_{n \to \infty} \frac{M(n)}{\log n}= \frac{1}{- \log P(T_0<T_b)} \quad \text{ a.s.}
\end{align*}
Here, $T_a$ is the first time the random walk started at the origin  hits $a$ after time $0$, and
$b$ is a neighbor site of the origin. 
To compare, a classical result of 
Erd\H{o}s and Taylor \cite{dvo1} says that  for $d\ge3$,
\begin{align*}
\lim_{n \to \infty} \frac{M_0(n)}{\log n}= \frac{1}{- \log P(T_0<\infty)} \quad \text{ a.s.},
\end{align*}
and for $d=2$,  $M_0(n)/(\log n)^2$ is bounded away from zero and infinity a.s.  (the limit exists and is identified \cite{Dembo}  as mentioned later in Section 2). 

These results illuminate the geometric structure of the random walk range as well as the nature of recurrence or transience of random walks.
We are able to infer  that the  favorite site is outside the inner boundary from some time onwards with probability one. 
This may appear intuitively clear; it seems  improbable for the favorite  point  to continue to be  an inner boundary point since it must be visited many times, but
our result further shows that there are many inner boundary points that are  visited many times, 
with amounts comparable to that of the favorite point for $d\ge 3$.  
In addition,  the growth order of $M_0(n)$ is the same for all $d\geq 2$, meaning 
the phase transition which occurs between $d=2$  and $d\geq3$ for $M_0(n)$  does not occur  for  $M(n)$.

In Theorem \ref{m2}, which is a strong claim in comparison to Theorem \ref{m1},  we will provide an explicit answer  to the question of how many  frequently visited sites  among the inner boundary points exist. 

The upper bounds for both Theorem \ref{m1} and Theorem \ref{m2} are obtained using the idea in  \cite{dvo1}. 
The Chebyshev inequality and the Borel-Cantelli lemma  are also used in the same way as in  \cite{dvo1}. 
On the other hand, $M(n)$ is not monotone, while $M_0(n)$ is monotone.  
We work with the walk and its trajectory at the times $2^k$ and find a process that is monotone and a bit  larger than  $M(n)$, but with the desired asymptotics. 

On the other hand, the idea for the proof of the lower bound is different from that for the known results. 
In \cite{Dembo}, a Brownian occupation measure was used in the proof. 
Rosen \cite{Rosen} provided another proof to the result of \cite{Dembo}, 
in which he computed a crossing number instead of the number of the frequently visited site. 
%We think this reason is that the variance of the number of the frequently visited site is large to compare with the square of the first moment. 
In this paper, we use the Chebyshev inequality and the Borel-Cantelli lemma as in  \cite{Rosen} but for the number of the frequently visited sites among the inner boundary points. 
%Because we find that the variance of the number of the frequently visited site in the inner boundary points  is smaller than the square of the first moment. 
In addition, as the proof of the upper bound,  we estimate a number 
slightly smaller than the number of the frequently visited site among the inner boundary points. 

We conclude this introduction by mentioning  some known results about  the inner boundary points of the random walk range that are closely related to the present subject. % that the present author have obtained \cite{okada}. 
Let $L_n$ be the number of the inner boundary points up to time $n$. 
In \cite{itai}, it is noticed that the entropy of a random walk is essentially governed 
by the asymptotic of $L_n$. %the size of the boundary of the random walk range. 
In \cite{okada},  the law of large numbers  for $L_n$ is shown and  $\lim L_n/n$ is identified 
for a simple random walk on ${\mathbb Z}^d$ for every  $d\ge1$. 
Let $J_n^{(p)}$ denote the number of $p$-multiplicity points in the inner boundary and be defined as
\begin{align*}
J_n^{(p)}=\sharp &\{S_i\in \partial R(n):\sharp\{m:0\le m \le n,S_m=S_i \}=p \},
\end{align*}
where $\partial R(n)$ is the set of the inner boundary of $\{S_0,S_1,...,S_n\}$ and $\sharp A$ is the number of elements in $A$. 
In \cite{okada}, it is also shown that for a simple random walk in $\mathbb{Z}^2$, with $p\ge1$,
\begin{align}\notag
\frac{{\pi}^2}{2} \le \lim_{n\to \infty}EL_n\times\frac{(\log n)^2}{n} &\le 2{\pi}^2,
\\ \label{iii+}
\frac{\tilde{c}^{p-1}\pi^2}{4} \le  \lim_{n\to \infty}EJ_n^{(p)}\times\frac{(\log n)^2}{n}  
&\le \tilde{c}^{p-1}\pi^2,
\end{align}
where $\tilde{c}=P(T_0<T_b)$ for any/some neighbor site $b$ of the origin. 
These  may be compared with the results for the entire range; according to \cite{Fla},  
 $\sharp\{S_i:0\le i \le n\}$ in ${\mathbb{Z}}^2$ is asymptotic to $\pi n/\log n$  and 
the asymptotic form of  the number of $p$-multiplicity points in it is  independent of $p$. 
%%%%%%%%%%%%%%%%%%%%%%%%%%%%%%%%%%%%%%%%%%%%%%%%%%%%%%%%%%%%%%%%%%
%%%%%%%%%%%%%%%%%%%%%%%%%%%%%%%%%%%%%%%%%%%%%%%%%%%%%%%%%%%%%%%%%%
\section{Framework and Main Results }
Let $\{S_k\}_{k=0}^{\infty}$ be a simple random walk 
on the $d$-dimensional square lattice ${\mathbb Z}^d$. 
Let $P^a$ denote the probability of the simple random walk starting at $a$;    
we simply write $P$ for $P^0$.  
Let ${\mathbb N}=\{1,2,3,...\}$ and for $n\in{\mathbb N}$, set $R(n) = \{S_0,S_1, \ldots, S_n\}$ as the random walk range up to the time $n$. 
We call $z \in \mathbb{Z}^d$ a neighbor of $a\in \mathbb{Z}^d$ if $|a-z|=1$. 
Let ${\cal N}(a)$ be the set of all neighbors of $a$ defined as
$${\cal N}(a)=\{z\in{\mathbb Z}^d : |a-z|=1 \}.$$
The inner boundary of $A \subset \mathbb{Z}^d$ is denoted by $\partial A$, that is
$$\partial A =\{x \in A : {\cal N} (x) \not\subset A  \}.$$  
We denote the number of visits to $x$ of $S_m$, $0\le m\le n$ by $K(n,x)$. 
That is, 
$$K(n,x)=\sum_{j=0}^n1_{\{S_j=x\}}.$$
Moreover, we set $$M(n):=\max_{x \in \partial R(n)}K(n,x).$$
Clearly, this is the maximal number of visits of the random walk of length $n$ to 
$\partial R(n)$, the inner boundary of $R(n)$. 
Let $T_x$ denote the first passage time to $x$: 
$T_x=\inf\{m\ge1: S_m=x\}$. 
We are now ready to state our main theorems. 
The first theorem provides us with sharp asymptotic behavior of $M(n)$.
\begin{thm}\label{m1}
For $d\ge2$
\begin{align*}
\lim_{n \to \infty} \frac{M(n)}{\log n}=\beta_d \quad \text{ a.s.},
\end{align*}
where
\begin{align*}
\beta_d=\frac{1}{- \log P(T_0<T_b)}
\end{align*}
for any $b\in {\cal N}(0)$. 
Note that $P(T_0<T_b)$ does not depend on the choice of $b\in {\cal N}(0)$, but rather 
depends only on the dimension. 
\end{thm}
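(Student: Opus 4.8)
The plan is to prove the two matching bounds $\limsup_{n} M(n)/\log n \le \beta_d$ and $\liminf_n M(n)/\log n \ge \beta_d$ almost surely, writing $p := P(T_0<T_b)$ so that $\beta_d = -1/\log p$. The probabilistic meaning of $p$ drives everything: decomposing the trajectory into excursions from a site $x$, each excursion returns to $x$ before reaching a fixed neighbor $b$ with probability $p$ by translation invariance, so the number of visits to $x$ accumulated before the walk first reaches $b$ is essentially geometric, with $P(\ge k \text{ visits to } x \text{ before } T_b)=p^{k-1}$. Since a site $x$ lies in $\partial R(n)$ precisely when some neighbor $b\in\mathcal N(x)$ is still unvisited — which forces every visit to $x$ up to time $n$ to precede the first visit to $b$ — the count $K(n,x)$ for an inner boundary point is governed by this geometric tail, and the threshold at which the expected number of such sites is of order one is $k(-\log p)\approx\log n$, i.e. $k\approx\beta_d\log n$.

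\textbf{Upper bound.} For $k\in\mathbb N$ set $V_k(n)=\sharp\{x\in\partial R(n): K(n,x)\ge k\}$. For fixed $x$ and a neighbor $b$, the event $\{K(n,x)\ge k,\ b\notin R(n)\}$ implies that after the first visit to $x$ the walk returns to $x$ at least $k-1$ times without ever hitting $b$; the strong Markov property at the hitting time of $x$ gives $P(K(n,x)\ge k,\ b\notin R(n))\le P(x\in R(n))\,p^{k-1}$. Summing over the at most $2d$ neighbors and over $x$, and using $\sum_x P(x\in R(n))=E|R(n)|\le Cn$ for all $d\ge2$, yields $E V_k(n)\le Cn\,p^{k-1}$. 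Taking $k=k(n)=\lceil(1+\varepsilon)\beta_d\log n\rceil$ makes $p^{k-1}\approx n^{-(1+\varepsilon)}$, so $E V_{k(n)}(n)\le Cn^{-\varepsilon}$; by Markov's inequality $P(V_{k(n)}(n)\ge1)$ is summable along $n=2^j$, and Borel--Cantelli gives $M(2^j)<k(2^j)$ eventually. To cover $n\in[2^j,2^{j+1}]$ despite the non-monotonicity of $M$, I would pass to the monotone surrogate $\widetilde M(n):=\max_x\max_{b\in\mathcal N(x)}\sharp\{m\le n: S_m=x,\ b\notin R(m)\}$, which satisfies $M(n)\le\widetilde M(n)$ and is nondecreasing; the same first-moment estimate bounds $\widetilde M(2^{j+1})$, and letting $\varepsilon\downarrow0$ gives $\limsup_n M(n)/\log n\le\beta_d$.

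\textbf{Lower bound.} Here I would run a second-moment (Chebyshev) argument for the count of good sites, combined with a persistence device that defeats the non-monotonicity. Fix $\varepsilon>0$, put $k_j=\lfloor(1-\varepsilon)\beta_d\,j\log2\rfloor$, and let $W_j$ count the sites $x$ with $K(2^j,x)\ge k_j$ having some neighbor $b$ that is unvisited up to time $2^{j+1}$ (not merely up to $2^j$). The first moment parallels the upper bound: the dominant contribution is $\sum_x P(x\in R(2^j))\,p^{k_j}$ times the probability that $b$ stays unvisited on the extra interval $(2^j,2^{j+1}]$, a factor bounded below by a constant for $d\ge3$ and decaying only like $1/\log(2^{j+1})$ for $d=2$; since $p^{k_j}\approx2^{-(1-\varepsilon)j}$ leaves a surplus factor $2^{\varepsilon j}$ against the $O(2^j)$ size of $R(2^j)$, in either case $E W_j\to\infty$. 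A variance bound $\mathrm{Var}(W_j)=o((E W_j)^2)$ that is summable in $j$ then forces $W_j\ge1$ for all large $j$ a.s.\ by Borel--Cantelli. On that event, choose a witnessing $x$: since its neighbor $b$ is unvisited up to $2^{j+1}$, the site $x$ lies in $\partial R(n)$ for every $n\in[2^j,2^{j+1}]$, while $K(n,x)\ge K(2^j,x)\ge k_j$. Hence $M(n)\ge k_j\ge(1-\varepsilon)\tfrac{j}{j+1}\beta_d\log n$ throughout that range, and letting $j\to\infty$ and then $\varepsilon\downarrow0$ gives $\liminf_n M(n)/\log n\ge\beta_d$.

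\textbf{Main obstacle.} The decisive and most laborious step is the variance estimate for $W_j$ in the lower bound: I must show that for two sites $x\ne y$ the joint probability that both are visited $\ge k_j$ times with a respective neighbor kept unvisited factorizes up to errors summing to $o((E W_j)^2)$. This calls for decomposing the trajectory into excursions between $x$ and $y$ and estimating the intervening hitting probabilities through the Green's function, and it is genuinely dimension-sensitive: in $d\ge3$ transience renders the two sites nearly independent once separated, whereas in $d=2$ recurrence and the $1/\log$ avoidance probabilities introduce logarithmic corrections that must be shown not to disturb the leading order. Controlling the diagonal and near-diagonal terms — pairs $x,y$ at bounded distance, where the two events are strongly correlated — is the delicate part, and is where I expect the bulk of the technical work to lie.
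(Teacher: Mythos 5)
Your upper bound is correct and is essentially the paper's own argument: your monotone surrogate $\widetilde M(n)$ (visits to $x$ while a fixed neighbor $b$ is still unvisited) plays exactly the role of the paper's $\tilde{\Theta}_n(\beta)$, the first-moment bound $Cn\,p^{k-1}$ via the strong Markov property at successive hitting times is the same computation, and the Borel--Cantelli argument along $n=2^j$ matches. Your lower-bound architecture also coincides with the paper's: demanding the neighbor stay unvisited up to the \emph{next} checkpoint so that the witness remains on the inner boundary throughout the whole block is precisely the paper's requirement $x\in R(u_{n-1})\cap\partial_b R(u_n)$ with the visits accumulated by time $u_{n-1}$.

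The genuine gap is your choice of dyadic checkpoints $2^j$ in the lower bound, which makes the scheme unworkable for $d=2$. Your Borel--Cantelli step needs $\sum_j \mathrm{Var}(W_j)/(EW_j)^2<\infty$. In two dimensions the events ``$x$ keeps a neighbor unvisited up to time $N$'' and ``$y$ keeps a neighbor unvisited up to time $N$'' have covariances that decay only like inverse powers of $\log N$: the single-point avoidance probability is $\pi/\log N+O((\log N)^{-2})$, and the two-point avoidance estimate that the paper proves, $P\bigl(T_b\wedge T_{x+b}>\lceil u_n/n\rceil\bigr)\le \pi/(n+1)^2+C\log n/n^4$ with $n^2\approx\log u_n$, shows the relevant cancellation is only of relative size $\asymp 1/\sqrt{\log N}$ with leftover errors of relative size $\asymp \log\log N/\log N$. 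Even with this refinement (and the paper's Remark 4.3 shows the naive factorization, the analogue of what you propose, gives only $C/\sqrt{\log N}$-type relative variance, which already fails), the best available bound is $\mathrm{Var}(Q_n)/(EQ_n)^2\le C\log\log N/\log N$. Along $N_j=2^j$ this is $\asymp \log j/j$, which is \emph{not} summable; no Chebyshev--Borel--Cantelli argument can close at geometric spacing. This is exactly why the paper runs the whole lower bound along the doubly exponential scale $u_n=\lceil\exp(n^2)\rceil$, where the same bound reads $C\log n/n^2$ and is summable; nothing is lost in the interpolation, since for $u_{m-1}\le n<u_m$ one still gets $M(n)/\log n\ge \beta (m-1)^2/m^2\to\beta$. (For $d\ge3$ your dyadic scheme would survive, since transience makes the pair correlations polynomially small in the time separation.) A secondary omission, also repaired by the paper's construction: without the ``fast return'' truncation $E_{l,n}$ (each of the required returns occurs within $k$ steps, at cost $P(T_0<T_b\wedge k)$ per return, with $k$ fixed large), your first-moment lower bound $EW_j\gtrsim 2^j p^{k_j}/j$ is not justified in $d=2$ --- conditioned on $\{T_0<T_b\}$ the return times have $1/\log t$ tails, so forcing $k_j\asymp j$ returns to all occur by time $2^j$ costs a nonnegligible factor --- and this truncation is also what localizes each event in a short time window and makes the factorization over distant pairs tractable in the first place.
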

Leading to the second main theorem, 
we first define $\Theta_n (\delta)$ for $n \in \mathbb{N}$ and $0<\delta<1$ as
\begin{align*}
\Theta_n(\delta):=\sharp \{ x\in \partial R(n) :
\frac{K(n,x)}{\log n}\ge \beta_d\delta \}.
\end{align*}
This is the cardinality of points in $\partial R(n)$ whose number of visits is comparable to the maximal order appearing in Theorem \ref{m1}, with a ratio greater than or equal to $\delta$. 
Our second main theorem exhibits the sharp logarithmic asymptotic behavior of $\Theta_n(\delta)$ as $n \to\infty$. 
\begin{thm}\label{m2}
For $d\ge2$ and $0<\delta<1$,
\begin{align*}
\lim_{n \to \infty} \frac{\log \Theta_n(\delta)}{\log n}=1-\delta \quad a.s.
\end{align*}
\end{thm}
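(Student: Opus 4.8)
The plan is to pass through the first moment of $\Theta_n(\delta)$ and then establish matching almost-sure upper and lower bounds. The starting point is the decomposition $\Theta_n(\delta)=\sum_{p\ge \beta_d\delta\log n}J_n^{(p)}$, together with the mechanism already made rigorous in the proof of Theorem~\ref{m1}: once the walk reaches an inner-boundary point $x$ with an unvisited neighbor $b$, each successive return to $x$ that avoids $b$ occurs with probability $\tilde c:=P(T_0<T_b)$, so the number of visits to such a point has a geometric tail of ratio $\tilde c$, and $EJ_n^{(p)}\asymp \tilde c^{\,p}\,f(n)$ with $f(n)=n^{1+o(1)}$ (of order $n/(\log n)^2$ when $d=2$ and of order $n$ when $d\ge3$). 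Summing the geometric tail from $p=\beta_d\delta\log n$ and using the defining identity $\beta_d\log\tilde c=-1$ gives $\tilde c^{\,\beta_d\delta\log n}=n^{-\delta}$, hence $E\Theta_n(\delta)=n^{\,1-\delta+o(1)}$. This identifies $1-\delta$ as the correct exponent and reduces the theorem to concentration of $\Theta_n(\delta)$ about this order along a dyadic subsequence, the passage to all $n$ being handled by the monotonicity devices below.

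For the almost-sure upper bound I would run a first-moment argument in the spirit of \cite{dvo1}. Since $x\in\partial R(n)$ exactly on a time interval $[\tau_x,\sigma_x)$, where $\tau_x$ is the first visit to $x$ and $\sigma_x$ the time the last neighbor of $x$ is visited, for $n\in[2^k,2^{k+1}]$ the count $\Theta_n(\delta)$ is dominated by the monotone surrogate $U_k:=\sharp\{x:\ \tau_x\le 2^{k+1},\ \sigma_x>2^k,\ K(2^{k+1},x)\ge \beta_d\delta\,k\log 2\}$, obtained by freezing the threshold and the boundary window at the endpoints of the dyadic block. The same first-moment computation gives $EU_k=2^{\,k(1-\delta)+o(k)}$, so Markov's inequality yields $P(U_k\ge 2^{\,k(1-\delta+\varepsilon)})\le 2^{-k\varepsilon+o(k)}$, which is summable. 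Borel--Cantelli then forces $U_k<2^{\,k(1-\delta+\varepsilon)}$ eventually, and since $n\ge 2^k$ on the block this gives $\Theta_n(\delta)<n^{\,1-\delta+\varepsilon}$ for all large $n$; letting $\varepsilon\downarrow0$ proves $\limsup \log\Theta_n(\delta)/\log n\le 1-\delta$.

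For the lower bound I would use a second-moment argument as in \cite{Rosen}, applied to a restricted count that is also a monotone minorant. Fix a net of candidate sites separated by a distance growing slowly with $k$, assign to each a designated neighbor $b_x$, and let $A_x$ be the event that $x$ is visited at least $\beta_d\delta(k+1)\log 2$ times by time $2^k$ while $b_x$ stays unvisited up to time $2^{k+1}$. If $A_x$ holds then $x$ contributes to $\Theta_n(\delta)$ for every $n\in[2^k,2^{k+1}]$, since the visit count is monotone and exceeds the larger threshold while $b_x$ guarantees $x\in\partial R(n)$ throughout the block; hence $\Theta_n(\delta)\ge \Theta''_k:=\sharp\{x:A_x\}$. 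The inflated threshold and lengthened boundary window cost only a factor $n^{o(1)}$, so $E\Theta''_k=2^{\,k(1-\delta)+o(k)}$. Chebyshev's inequality then gives $P(\Theta''_k\le \tfrac12 E\Theta''_k)\le 4\,\mathrm{Var}(\Theta''_k)/(E\Theta''_k)^2$, and provided this is summable over $k$, Borel--Cantelli yields $\Theta_n(\delta)\ge \Theta''_k\ge n^{\,1-\delta-\varepsilon}$ for all large $n$, completing the matching lower bound.

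The main obstacle is the variance estimate for $\Theta''_k$, that is, controlling $\sum_{x,y}[P(A_x\cap A_y)-P(A_x)P(A_y)]$. The diagonal and near-diagonal terms are bounded crudely using $E\Theta''_k$ and the sparsity of the net, so the crux is showing that for well-separated $x,y$ one has $P(A_x\cap A_y)\le (1+o(1))P(A_x)P(A_y)$. I would obtain this through an excursion decomposition: the local returns that produce the geometric factor $\tilde c^{\,p}$ at $x$ depend only on the walk inside a small neighborhood of $x$, and likewise for $y$, so after conditioning on the macroscopic trajectory these local structures are independent, while the probabilities of reaching $x$ and $y$ factor up to Green's-function corrections. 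Making this quantitative requires the standard hitting-probability and Green's-function asymptotics, and it is most delicate when $d=2$, where recurrence and the logarithmic Green's function make the reaching events strongly dependent; there I would either separate the points on a coarse enough scale or work directly with the conditional excursion counts, as in the crossing-number computation of \cite{Rosen}, to keep the off-diagonal correlation under control.
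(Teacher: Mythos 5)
Your overall architecture (first and second moments, Borel--Cantelli along blocks, monotone surrogates) matches the paper's, but both halves contain a genuine gap, and in each case the gap is precisely the point that the paper's constructions are designed to handle. \textbf{Upper bound:} your surrogate $U_k$ checks the boundary condition at the left endpoint $2^k$ of the block but counts visits up to the right endpoint $2^{k+1}$, and this decoupling invalidates the claim $EU_k=2^{k(1-\delta)+o(k)}$. The exponent $1-\delta$ requires every one of the $\lceil \beta_d\delta k\log 2\rceil$ visits to pay the neighbor-avoidance price $P(T_0<T_b)$; but visits made after time $2^k$ are not constrained by $\{\sigma_x>2^k\}$ and pay only the plain return price. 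Consider points first visited during $(2^k,2^{k+1}]$: for these, $\{\sigma_x>2^k\}$ holds with probability bounded below, and all required visits may occur after time $2^k$. For $d\ge3$ this forces $EU_k\ge 2^{k(1-\delta\theta_d)+o(k)}$ with $\theta_d=\log P(T_0<\infty)/\log P(T_0<T_b)<1$, so Markov plus Borel--Cantelli can only yield $\limsup \log\Theta_n(\delta)/\log n\le 1-\delta\theta_d$; for $d=2$, where $P(T_0<\infty)=1$, a positive fraction of the $\asymp 2^k/k$ fresh points of the block collects $\lceil \beta_2\delta k \log 2\rceil$ visits at only polynomial cost, so $EU_k=2^{k(1+o(1))}$ and the argument gives nothing. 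The repair is the paper's surrogate $\tilde{\Theta}_n(\beta)$ of Lemma \ref{theta}: impose the boundary condition at the random time $T_x^{\lceil \beta\log n/2\rceil}$ of the last required visit. By the nesting property (\ref{el*}) this condition is still implied by $x\in\partial R(n)$ (so the surrogate dominates $\Theta_n(\delta)$ across the dyadic block), yet it forces all required returns to interleave with the avoidance of a fixed unvisited neighbor, producing the factor $P(T_0<T_b)^{\lceil \beta\log n/2\rceil-1}$.

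\textbf{Lower bound:} you run Chebyshev and Borel--Cantelli along the dyadic sequence $N=2^k$, which needs $\mathrm{Var}(\Theta''_k)/(E\Theta''_k)^2$ summable in $k$. For $d=2$ this is essentially unattainable: the events $A_x$ contain the requirement that $b_x$ be avoided up to time $N$, whose probability is $\pi/\log N+O(1/(\log N)^2)$, and both the joint-versus-product discrepancy for two such avoidance constraints and the error terms in the hitting asymptotics themselves are of relative order $1/\log N=1/(k\log 2)$; hence the best variance ratio one can hope for is of order $\log k/k$, which is not summable. Separating the net points does not remove this correlation (it persists at separation $N^{\alpha}$ for $\alpha<1/2$, while separation of order $\sqrt{N}$ destroys the first moment). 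This is exactly why the paper performs the second-moment argument along the doubly exponential scale $u_n=\lceil \exp(n^2)\rceil$, where the relative variance $C\log n/n^2$ of Lemma \ref{hh} (that is, roughly $\log\log u_n/\log u_n$) is summable in $n$ --- and even there the refined events $A'''_{l,l',n}$ and the two-point estimate (\ref{fo3}) are needed: Remark \ref{bb} shows that the cruder pairing yields only $C/n$, already too weak for Borel--Cantelli. The passage from the sparse sequence $u_n$ back to all $n$ is then done via monotonicity of $K(\cdot,x)$ and the nesting property (\ref{el}), at the harmless cost of the factor $h_k\to 1-\delta$. Your plan becomes correct if you replace the dyadic blocks by the blocks $[u_{n-1},u_n]$, but then the variance estimate you defer is precisely the content of Lemmas \ref{hh+} and \ref{hh}, i.e., the actual work of the paper.
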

We compare our main results to the corresponding results for the whole random walk range $R(n)$.  
We denote the quantity corresponding to $M(n)$ by $M_0(n)$. 
That is, $M_0(n)=\max_{x\in{\mathbb{Z}^{d}}}K(n,x)$ where 
$M_0(n)$ represents the maximal number of visits of the random walk to a single site in the entire random walk range until time  $n$. 
Erd\H{o}s and Taylor \cite{dvo1} showed
that  for $d\ge3$
\begin{align*}
\lim_{n \to \infty} \frac{M_0(n)}{\log n}= \frac{1}{-\log P(T_0<\infty)}\quad \text{ a.s.}
\end{align*}
For $d=2$, they obtained 
\begin{align*}
\frac{1}{4\pi} \le 
\liminf_{n \to \infty} \frac{M_0(n)}{(\log n)^2}\le
\limsup_{n \to \infty} \frac{M_0(n)}{(\log n)^2}\le \frac{1}{\pi} \quad \text{ a.s.},
\end{align*}
and conjectured that the limit exists and equals $1/\pi$ a.s.  
Forty years later,  Dembo et al. \cite{Dembo} verified this conjecture and also showed how many  frequently visited sites  
of order $(\log n)^2$ there are in the following sense. 
Let $d=2$. 
Then, for  $0<a<1$,  define 
$$\Theta_{0,n}=\sharp\{ x\in {\mathbb{Z}^{2}}: 
\frac{K(n,x)}{(\log n)^2}\ge \frac{a}{\pi} \}.$$ 
Then
\begin{align*}
\lim_{n\to \infty} \frac{\log \Theta_{0,n}}{\log n}=1-a \quad \text{ a.s.}
\end{align*}

In view of these results, Theorem  \ref{m1} entails the following corollary. 
\begin{cor}
For $d\ge2$, the favorite site does not appear in the inner boundary from some time onwards a.s.  
In other words, $M_0(n)>M(n)$ for all but finitely many $n$ with probability one. 
\end{cor}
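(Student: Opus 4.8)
The plan is to read off the corollary from the $a.s.$ limit in Theorem \ref{m1} together with the known asymptotics of $M_0(n)$. First I would record the trivial inequality $M(n)\le M_0(n)$, which holds for every $n$ because $\partial R(n)\subseteq R(n)$ and $K(n,x)=0$ off the range, so $M(n)=\max_{x\in\partial R(n)}K(n,x)\le \max_{x\in R(n)}K(n,x)=M_0(n)$. A favorite site is a maximizer of $K(n,\cdot)$ over all of $\mathbb{Z}^d$; such a maximizer lies in $\partial R(n)$ precisely when $M(n)=M_0(n)$. Hence the two formulations in the corollary coincide, and the whole statement reduces to showing that, with probability one, $M(n)<M_0(n)$ for all sufficiently large $n$.

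For $d\ge3$ I would compare the two $a.s.$ limits directly. Theorem \ref{m1} gives $M(n)/\log n\to\beta_d=1/(-\log P(T_0<T_b))$, while the result of Erd\H{o}s and Taylor \cite{dvo1} gives $M_0(n)/\log n\to\alpha_d:=1/(-\log P(T_0<\infty))$, both $a.s.$ Since the walk is transient, $0<P(T_0<\infty)<1$ and $0<P(T_0<T_b)<1$, so both logarithms are negative and $\alpha_d,\beta_d$ are finite and positive. The key inequality is $P(T_0<T_b)<P(T_0<\infty)$: the event $\{T_0<T_b\}$ is contained in $\{T_0<\infty\}$, and the difference contains $\{T_b<T_0<\infty\}$, i.e.\ the walk steps to the neighbor $b$ at time $1$ and subsequently returns to the origin, an event of strictly positive probability. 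Therefore $-\log P(T_0<T_b)>-\log P(T_0<\infty)>0$, whence $\beta_d<\alpha_d$. Because both normalized quantities converge $a.s.$ to these distinct limits, for $a.s.$ every realization there is $N$ with $M(n)/\log n<\frac12(\alpha_d+\beta_d)<M_0(n)/\log n$ for all $n\ge N$, giving $M(n)<M_0(n)$ eventually.

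For $d=2$ the comparison is even cruder, since the two quantities live on different logarithmic scales. Theorem \ref{m1} gives $M(n)\sim\beta_2\log n$, while Dembo et al.\ \cite{Dembo} show $M_0(n)/(\log n)^2\to 1/\pi$ $a.s.$, so $M_0(n)$ grows like $(\log n)^2/\pi$. Thus $M(n)/M_0(n)\to 0$ $a.s.$, and in particular $M(n)<M_0(n)$ for all large $n$.

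This corollary is a genuinely short deduction rather than a hard theorem, so I do not expect a serious obstacle; the only point requiring care is the strict inequality $P(T_0<T_b)<P(T_0<\infty)$ in the transient regime, together with checking that both hitting probabilities lie strictly between $0$ and $1$ so that the reciprocals of their logarithms are well defined, positive, and comparable. Once that is in place, the $a.s.$ convergence of $M(n)/\log n$ and the known behavior of $M_0(n)$ do all the work in both the $d\ge3$ and $d=2$ cases.
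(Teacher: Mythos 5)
Your proposal is correct and is exactly the deduction the paper intends: the paper states the corollary follows "in view of these results," meaning precisely the comparison of Theorem \ref{m1} with the Erd\H{o}s--Taylor limit $M_0(n)/\log n \to 1/(-\log P(T_0<\infty))$ for $d\ge3$ and the Dembo et al.\ limit $M_0(n)/(\log n)^2\to1/\pi$ for $d=2$. Your only added content --- the strict inequality $P(T_0<T_b)<P(T_0<\infty)$ via the event $\{S_1=b,\,S_2=0\}$, and the equivalence of the two formulations of the corollary --- is a correct and welcome filling-in of details the paper leaves implicit.
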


The $p$-th hitting times  $T^p_x$ for $p=0,1\,\ldots$ and  the partial ranges  $R[l,n]$  that we are now to define play significant roles. 
 Let $T_x^0=\inf\{j\ge0: S_j=x\}$ and for $p\ge1$,
 \begin{align} \label{Tp}
 T_x^p=\inf\{ j>T_x^{p-1}: S_j=x\}
 \end{align}
 with the convention   $\inf \emptyset =\infty$. 
For $l,n\in{\mathbb{N}}$ let 
 $$R[l,n]=\{S_l,S_{l+1},...,S_n\}$$
  if $n\geq l$
 and  $R[l,n]=\emptyset$ if  $l>n$  and $R[0,-1]=\emptyset$. 
The inner boundary of the random walk range $R[l,n]$ is denoted simply by $\partial R[l,n]$ as it is for $R(n)$. 
 It is noted that  $T_x=T_x^0$ for $x\neq S_0$ and $T_{x}=T_{x}^1$ if  $x=S_0$. Also,  $R(n)= R[0,n]$.
 
In the proofs given in the remainder of this paper we denote contextual constants by $C$ or $c$.  
In addition, $\lceil a \rceil$ denotes the smallest integer $n$ with $n\ge a$, 
and $A^c$ denotes a complementary set of $A$. 
 %We usually  write   $\{a_i\}_{i=0}^n$ for the set $\{a_0,a_1,..,a_n\}$. 
%%%%%%%%%%%%%%%%%%%%%%%%%%%%%%%%%%%%%%%%%%%%%%%%%%%%%%%%%%%%%%%%%%
%%%%%%%%%%%%%%%%%%%%%%%%%%%%%%%%%%%%%%%%%%%%%%%%%%%%%%%%%%%%%%%%%%
\section{The upper bound in Theorem \ref{m1}}
Here, we prove the following proposition. 
\begin{prop}\label{po}
For $d\ge2$
$$\limsup_{n\to \infty} \frac{M(n)}{\log n}\le \beta_d\quad\text { a.s.}
$$
\end{prop}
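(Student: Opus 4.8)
The plan is to run a first-moment (union-bound) argument on the number of heavily visited inner boundary points, and then to upgrade the resulting in-probability estimate to an almost-sure one by Borel--Cantelli along the dyadic subsequence $n=2^{j}$. For an integer threshold $k$ put $N_k(n)=\sharp\{x\in\partial R(n):K(n,x)\ge k\}$, so that $M(n)\ge k$ is the same as $N_k(n)\ge 1$. The first step is the single-site estimate
$$P\bigl(K(n,x)\ge k,\ b\notin R(n)\bigr)\le P(T_x^{0}\le n)\,\tilde c^{\,k-1},\qquad \tilde c:=P(T_0<T_b).$$
This follows by decomposing the event at the first visit $T_x^{0}\le n$ to $x$: on $\{b\notin R(n)\}$ none of the at least $k-1$ returns to $x$ may hit $b$ beforehand, and by the strong Markov property together with translation invariance each such return has probability $P^{x}(T_x<T_b)=P(T_0<T_{b-x})=\tilde c$ (independent of the neighbor), giving the factor $\tilde c^{\,k-1}$; dropping all constraints except reaching $x$ by time $n$ yields the prefactor $P(T_x^{0}\le n)$.

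Summing over $x$ and its $2d$ neighbors, and using the elementary identity $\sum_{x}P(T_x^{0}\le n)=E\,\sharp R(n)\le n+1$, I would obtain
$$E[N_k(n)]\le 2d\,\tilde c^{\,k-1}\sum_{x}P(T_x^{0}\le n)\le 2d\,(n+1)\,\tilde c^{\,k-1}.$$
Since $\beta_d=(-\log\tilde c)^{-1}$, i.e. $\tilde c=e^{-1/\beta_d}$, choosing $k=\lceil(\beta_d+\varepsilon)\log n\rceil$ makes $\tilde c^{\,k-1}\le C\,n^{-1-\varepsilon/\beta_d}$, so $E[N_k(n)]\le C\,n^{-\varepsilon/\beta_d}\to0$. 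This already gives $M(n)/\log n\le\beta_d+\varepsilon$ in probability, but the bound is not summable in $n$, so Borel--Cantelli cannot be applied to the full sequence.

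The main obstacle is precisely that $M(n)$ is not monotone: a site can leave $\partial R(n)$ once its missing neighbor is finally visited, so one cannot interpolate $M$ between consecutive dyadic times. To circumvent this I would introduce the monotone majorant
$$\hat M(m):=\max_{x}\max_{b\in{\cal N}(x)}V_{x,b}(m),\qquad V_{x,b}(m):=\sharp\{0\le i\le m:S_i=x,\ i<T_b\}.$$
Here $V_{x,b}(m)$ counts the visits to $x$ made strictly before $b$ is first hit, so it is nondecreasing in $m$, and hence so is $\hat M$. Moreover $\hat M(m)\ge M(m)$: if $x\in\partial R(m)$ realizes $M(m)$ and $b\in{\cal N}(x)$ is its unvisited neighbor, then $T_b>m$ forces $V_{x,b}(m)=K(m,x)=M(m)$. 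The same decomposition as above gives $P(V_{x,b}(n)\ge k)\le P(T_x^{0}\le n)\,\tilde c^{\,k-1}$, so $\hat M$ obeys the identical first-moment bound $P(\hat M(n)\ge k)\le 2d(n+1)\tilde c^{\,k-1}$.

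Finally I would apply this along $n=2^{j}$. With $k_j=\lceil(\beta_d+\varepsilon)\log 2^{j+1}\rceil$ the bound becomes $P(\hat M(2^{j+1})\ge k_j)\le C\,2^{-\varepsilon j/\beta_d}$, which is summable, so by Borel--Cantelli $\hat M(2^{j+1})<k_j$ for all large $j$ almost surely. For $n\in[2^{j},2^{j+1}]$, monotonicity and domination give $M(n)\le\hat M(n)\le\hat M(2^{j+1})<(\beta_d+\varepsilon)\log 2^{j+1}+1$, and since $\log 2^{j+1}\le\log n+\log 2$ this yields $\limsup_n M(n)/\log n\le\beta_d+\varepsilon$ a.s. Letting $\varepsilon\downarrow0$ along a countable sequence then completes the argument. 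The only genuinely delicate point is the construction and verification of $\hat M$: it must dominate $M$, be monotone, and still satisfy the same first-moment estimate, and the domination relies exactly on the defining feature that an inner boundary point possesses an unvisited neighbor.
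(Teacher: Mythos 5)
Your proof is correct, and its probabilistic core coincides with the paper's: your single-site bound $P(V_{x,b}(n)\ge k)\le P(T_x^{0}\le n)\,\tilde c^{\,k-1}$ is exactly the estimate the paper derives in Lemma \ref{theta}, namely $P\bigl(T_0^{m-1}<\infty,\ 0\in\partial R(T_0^{m-1})\bigr)\le 2d\,P(T_0<T_b)^{m-1}$; your summation $\sum_x P(T_x^{0}\le n)=E\,\sharp R(n)\le n+1$ is the same counting that the paper performs by decomposing $E[\tilde{\Theta}_n(\beta)]$ over first-visit times $l$ (fresh points $S_l\notin R(l-1)$); and the dyadic Borel--Cantelli plus interpolation step is identical. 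The one genuine structural difference is how the non-monotonicity of $M(n)$ is handled. The paper works with the running maximum $\tilde M(n)=\max_{l\le n}M(l)$ and must then show that $\{\tilde M(n)\ge m\}$ is contained in $\bigcup_x\{x\in\partial R(T_x^{m}),\,K(n,x)\ge m\}$, which relies on the nesting property $R(I_0)\cap\partial R(I_2)\subset\partial R(I_1)$ of inner boundaries (property (\ref{el*})); you instead introduce the explicitly monotone majorant $\hat M(n)=\max_{x,b}V_{x,b}(n)$, for which monotonicity in $n$ and the domination $M\le\hat M$ are immediate from the defining feature of an inner boundary point. The two devices are equivalent in substance: $V_{x,b}(n)\ge k$ holds iff $T_x^{k-1}\le n$ and the walk has not hit $b$ by time $T_x^{k-1}$, so your event $\{\hat M(n)\ge k\}$ is, up to indexing, the paper's event $\{\tilde{\Theta}_n(\beta)\ge 1\}$. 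What your packaging buys is transparency: the monotonization requires no auxiliary lemma at all. What the paper's packaging buys is reusability: $\tilde{\Theta}_n$ and the nesting property (\ref{el*}) are used again verbatim in the upper bound of Theorem \ref{m2}, and in $\partial_b$ form throughout the lower-bound argument.
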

Unlike the proof of the lower bound below, the proof of Proposition \ref{po}   will be performed independently of  the dimension $d$.  
As mentioned above, neither $M(n)$ nor $\Theta_n(\delta)$ is  monotone in $n\in \mathbb{N}$. 
To mitigate this issue, we introduce  the random variables.  
For $\beta>0$, we set 
\begin{align*}
\tilde{\Theta}_n(\beta)=\sharp \{ x\in\partial R(T_x^{\lceil \beta\log  n/2\rceil }) :K(n,x)\ge \lceil \beta \log \frac{n}{2}\rceil \}
\end{align*}
($T_x^p$ is defined by (\ref{Tp})). 
This is a modification of $\Theta_n(\beta/\beta_d)$ made by relaxing the constraint of being on the inner boundary.  
Note that $\tilde\Theta_n(\beta)$  vanishes for all sufficiently large $n\in \mathbb{N}$ if $\beta>\beta_d$.

 \begin{lem}\label{theta}
For $\beta>0$ there exists $C>0$ such that for any $n\in \mathbb{N}$
\begin{align*}
E[\tilde{\Theta}_n(\beta)]\le Cn^{1-\frac{\beta}{\beta_d}}.
\end{align*}
\end{lem}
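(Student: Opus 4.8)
The plan is to compute $E[\tilde\Theta_n(\beta)]$ by linearity of expectation, writing it as a sum over lattice sites $x$ of the probability that $x$ contributes to the count. The key observation is that the event defining membership bundles together two conditions: that $x$ is an inner boundary point of $R(T_x^{\lceil \beta\log n/2\rceil})$, and that $K(n,x)\ge \lceil \beta\log(n/2)\rceil$. I would first \emph{drop} the inner-boundary condition, since including it only decreases the probability; this gives
\begin{align*}
E[\tilde\Theta_n(\beta)]\le \sum_{x\in\mathbb Z^d} P\big(K(n,x)\ge p\big),
\end{align*}
where I abbreviate $p=\lceil \beta\log(n/2)\rceil$. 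The event $\{K(n,x)\ge p\}$ means the walk visits $x$ at least $p$ times up to time $n$, which in particular forces $T_x^{p-1}\le n$, i.e. the $(p-1)$-th return to $x$ occurs by time $n$.

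\medskip

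Next I would estimate $P(K(n,x)\ge p)$ using the strong Markov property at successive visits to $x$. Once the walk reaches $x$, each subsequent return to $x$ (before escaping to infinity, or in the recurrent case simply each return) happens with probability exactly $P^x(T_x<\infty)$; but the relevant per-visit factor here is the probability that, starting from $x$, the walk returns to $x$ \emph{before} leaving through a fixed neighbor direction in the sense encoded by $P(T_0<T_b)$. The cleanest route is: to have $K(n,x)\ge p$ the walk must first hit $x$ (probability at most $1$, or a summable factor in $x$ that will control the sum over the lattice), and then make at least $p-1$ successive returns to $x$. Each return contributes a multiplicative factor bounded by $P(T_0<\infty)$ in $d\ge3$; in $d=2$ returns are certain, so the time constraint $T_x^{p-1}\le n$ must be used instead. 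Since $\beta_d=1/(-\log P(T_0<T_b))$, I expect the per-return factor to combine to give $(P(T_0<T_b))^{p-1}\approx e^{-(p-1)/\beta_d}\approx (n/2)^{-\beta/\beta_d}$, yielding the target exponent $n^{-\beta/\beta_d}$.

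\medskip

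The summability over $x\in\mathbb Z^d$ is the point that must be handled with care and is \textbf{the main obstacle}, because naively $\sum_x P(K(n,x)\ge p)$ ranges over infinitely many sites. The resolution is that $K(n,x)\ge 1$ forces $x\in R(n)$, and $|R(n)|\le n+1$, so only at most $n+1$ sites can contribute; thus the sum has at most $n+1$ nonzero terms, each bounded by (roughly) $(n/2)^{-\beta/\beta_d}$ times the conditional geometric factor. This produces $E[\tilde\Theta_n(\beta)]\le (n+1)\cdot C\,(n/2)^{-\beta/\beta_d}$, and absorbing the constant $2^{\beta/\beta_d}$ into $C$ gives the claimed $Cn^{1-\beta/\beta_d}$. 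The delicate step is making the per-visit factor rigorously equal to $P(T_0<T_b)$ rather than merely $P(T_0<\infty)$: this requires expressing the return probability to $x$ via the first step out of $x$ to one of its $2d$ neighbors and invoking that $P(T_0<T_b)$ is independent of the neighbor $b$ (as stated in Theorem \ref{m1}), so that the correct constant $\beta_d$ emerges with the right symmetry. I would therefore carry out the return-probability estimate by conditioning on the first step after each visit to $x$ and summing over the $2d$ neighbors, which is where the constant $-\log P(T_0<T_b)$ enters.
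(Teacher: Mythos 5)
Your opening move --- dropping the inner-boundary condition because ``including it only decreases the probability'' --- is precisely the step that cannot be made; it is where the proof fails, and the failure is not repairable later. The boundary condition is not a technical nuisance: it is the sole source of the constant $\beta_d=1/(-\log P(T_0<T_b))$. Once you discard it, the quantity you are bounding is the number of sites visited at least $p=\lceil\beta\log (n/2)\rceil$ times, and for that quantity the claimed bound is simply false. In $d=2$, returns to a visited site are certain and a typical point of the range is revisited of order $\log n$ times, so $\sum_x P(K(n,x)\ge p)$ is of order $n^{1-o(1)}$ (comparable to $n/\log n$ for fixed $\beta$), nowhere near $n^{1-\beta/\beta_d}$; your own text concedes that in $d=2$ you must fall back on the time constraint $T_x^{p-1}\le n$, but that constraint only yields factors like $(1-c/\log n)^{p}=O(1)$, which is useless. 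In $d\ge3$ the per-return factor without the boundary condition is $P(T_0<\infty)$, giving at best $n^{1-\beta(-\log P(T_0<\infty))}$; since $P(T_0<T_b)<P(T_0<\infty)$, this exponent is strictly worse than $1-\beta/\beta_d$, and in fact the expectation of the relaxed count really is of that larger order, so the lemma cannot be recovered this way. Your final paragraph proposes to repair this by ``conditioning on the first step after each visit to $x$ and summing over the $2d$ neighbors,'' but that computation returns $\frac{1}{2d}\sum_{b'\in{\cal N}(0)}P^{b'}(T_0<\infty)=P(T_0<\infty)$ again: $P(T_0<T_b)$ is the probability of returning before hitting one \emph{fixed} neighbor, an event about the whole excursion, and no averaging over first steps produces it.

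The paper's proof keeps the boundary condition and converts it into neighbor avoidance. After reducing by the Markov property and translation invariance (summing over first-visit times $l$ with $S_l\in R(l-1)^c$, which plays the same role as your ``at most $n+1$ contributing sites''), the event to bound is contained in $\{T_0^{p-1}<\infty,\ 0\in\partial R(T_0^{p-1})\}$, and the key identity is
\begin{align*}
\{T_0^{p-1}<\infty,\ 0\in\partial R(T_0^{p-1})\}
=\bigcup_{b\in{\cal N}(0)}\{T_0^{p-1}<T_b\},
\end{align*}
that is, some fixed neighbor $b$ is avoided throughout all $p-1$ returns. A union bound over the $2d$ neighbors and the strong Markov property applied at each successive return give $2d\,P(T_0<T_b)^{p-1}\le Cn^{-\beta/\beta_d}$, which is exactly where $\beta_d$ enters; multiplying by the $n+1$ choices of $l$ gives the lemma. (There is also a small technical point you would have needed: the membership condition in $\tilde\Theta_n(\beta)$ refers to $\partial R(T_x^{\lceil\beta\log n/2\rceil})$, and the paper uses the monotonicity property (\ref{el*}) to pass from the boundary of the full range to the boundary of the range over the excursion interval before translating; this step is what makes the reduction to the walk started at $0$ legitimate.) So your overall scaffolding --- linearity, finitely many contributing sites, a geometric per-return factor --- is sound, but the estimate only closes if the boundary event is retained and exploited, not relaxed away.
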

\begin{proof}
First we introduce the elementary property. 
For any intervals $I_0$, $I_1$, $I_2\subset {\mathbb N}\cup \{0\}$ with $I_0 \subset I_1 \subset I_2$, it holds that 
\begin{align}\label{el*}
R(I_0)\cap \partial R (I_2)  \subset \partial R(I_1).
\end{align}
Note that we can write 
\begin{align}\label{by}
\tilde{\Theta}_n(\beta)
=\sum_{l=0}^n 1_{B_{l,n}},
\end{align}
where 
\begin{align*}
B_{l,n}=\{S_l \in R(l-1)^c \cap \partial R(T_{S_l}^{\lceil \beta \log n/2\rceil}),
K(n,S_l)\ge \lceil \beta \log  \frac{n}{2} \rceil \}.
\end{align*}
Since $K(l-1,S_l)=0$ on $\{S_l \in R(l-1)^c\}$, 
for $l\le n$
\begin{align*}
P(B_{l,n})=
&P(\sum_{j=0}^n1_{\{S_j=S_l\}}\ge \lceil \beta \log \frac{n}{2}\rceil, S_l \in R(l-1)^c \cap \partial R(T_{S_l}^{\lceil \beta \log n/2\rceil}) )\\
=&P(\sum_{j=l}^n1_{\{S_j=S_l\}}\ge \lceil \beta \log \frac{n}{2}\rceil, S_l \in R(l-1)^c \cap \partial R(T_{S_l}^{\lceil \beta \log n/2\rceil}) )\\
\le &P(\sum_{j=l}^n1_{\{S_j=S_l\}}\ge \lceil \beta \log \frac{n}{2}\rceil,
S_l\in \partial R[l,T_{S_l}^{\lceil \beta \log n/2\rceil}])\\
=&P(K(n-l,0)\ge \lceil \beta \log \frac{n}{2}\rceil, 0 \in  \partial R(T_{0}^{\lceil \beta \log n/2\rceil-1}) ).
\end{align*}
Here, the inequality comes from (\ref{el*}) 
with $I_0=\{ l \}$, $I_1=[l, T_{S_l}^{\lceil \beta \log n/2\rceil}]$ 
and $I_2=[0, T_{S_l}^{\lceil \beta \log n/2\rceil}]$. 
The last equality follows from the Markov property and the translation invariance for $S_l$. 
In addition, by applying the Markov property repeatedly, we obtain
\begin{align*}
&P(K(n-l,0)\ge \lceil \beta \log \frac{n}{2}\rceil, 0 \in  \partial R(T_{0}^{\lceil \beta \log n/2\rceil-1}) )\\
\le &P( T_0^{\lceil \beta \log n/2\rceil-1}<\infty, 0 \in  \partial R(T_{0}^{\lceil \beta \log n/2\rceil-1}) )\\
= &P(\cup_{b\in {\cal N}(0)}\{T_0^{\lceil \beta \log n/2\rceil-1 }< T_b\} )\\
\le &2dP(T_0<T_b)^{\lceil \beta \log n/2\rceil-1}
\le C n^{-\frac{\beta}{\beta_d}}.
\end{align*}
Hence, the assertion holds by $E[\tilde{\Theta}_n(\beta)]\le n\max_{1\le l \le n} P(B_{l,n})$ which follows from  (\ref{by}). 
\end{proof}

\begin{proof}[Proof of Proposition \ref{po}]
Since $M(n)$ is not monotonically increasing in $n$, 
we instead first consider $\tilde{M}(n)=\max_{l\le n}M(l)$. 
If $\tilde{M}(n)>m$, there exist $x_1\in \mathbb{Z}^d$ and  $n_1\le n$ such that 
$\tilde{M}(n)=M(n_1)=K(n_1,x_1)$ and  $x_1\in \partial R(n_1)$. 
Therefore, for such $n_1$, $m$ and $x_1$, it holds that $T_{x_1}^m\le n_1$ and hence $x_1\in\partial R(n_1)\cap \partial R(T_{x_1}^m)$ holds.  
Further, $K(n_1,x_1)\le K(n,x_1)$. 
Accordingly, we have 
\begin{align*}
&P(\tilde{M}(n)\ge \lceil \beta \log \frac{n}{2}\rceil)\\
\le & P(\cup_{x \in \mathbb{Z}^2 }\{x \in \partial R(T_{x}^{\lceil \beta \log n/2\rceil}), K(n,x)\ge \lceil \beta \log \frac{n}{2}\rceil \} ).
\end{align*} 
Thus, by Lemma \ref{theta} and the Chebyshev inequality, we obtain
\begin{align*}
P(\tilde{M}(n)\ge \lceil \beta \log \frac{n}{2}\rceil)
\le P(\tilde{\Theta}_n(\beta)\ge1)
\le Cn^{1-\frac{\beta}{\beta_d}}.
\end{align*} 
By using the Borel-Cantelli lemma  for any $\beta>\beta_d$, we can show that the events 
$\{\tilde{M}(2^k)>\beta \log 2^{k-1}\}$  
happen only finitely often with probability one. 
Therefore, it holds that for any $\beta>\beta_d$, 
\begin{align}\label{guu*}
\limsup_{k\to\infty} \frac{\tilde{M}(2^k)}{\beta \log 2^{k-1} }\le 1\quad\text{ a.s. }
\end{align}
Now we consider $M(n)$. 
For  any $k$, $n\in{\mathbb N}$ with $2^{k-1}\le n<2^k$ we have
$$ \frac{M(n)}{\log n}\le\frac{\tilde{M}(2^k)}{\log 2^{k-1}},$$ 
and so with (\ref{guu*}), for any $\beta>\beta_d$ 
\begin{align*}
\limsup_{n\to\infty} \frac{M(n)}{\beta\log n}
\le 1 \quad\text{ a.s. }
\end{align*}
Therefore, the proof is completed. 
\end{proof}

%%%%%%%%%%%%%%%%%%%%%%%%%%%%%%%%%%%%%%%%%%%%%%%%%%%%%%%%%%%%%%%%%%
%%%%%%%%%%%%%%%%%%%%%%%%%%%%%%%%%%%%%%%%%%%%%%%%%%%%%%%%%%%%%%%%%%
\section{The lower bound in Theorem \ref{m1}}
\subsection{Reduction of the lower bound of Theorem \ref{m1} to key lemmas}
Our goal in this section is to prove the following Proposition \ref{po1}. 

\begin{prop}\label{po1}
For $d\ge2$
\begin{align}\label{kii}
\liminf_{n\to \infty} \frac{M(n)}{ \log n}\ge \beta_d \quad\text{ a.s.}
\end{align}
\end{prop}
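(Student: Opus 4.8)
The plan is to obtain the matching lower bound by a second-moment (Chebyshev) argument applied to a count of frequently visited inner boundary points, in the spirit of Rosen's crossing-number method, combined with Borel--Cantelli. Throughout I fix $\beta<\beta_d$ and write $p=p_n=\lceil \beta\log n\rceil$; the goal is to show that for every such $\beta$ one has $M(n)\ge p_n$ for all large $n$ almost surely, which gives $\liminf_n M(n)/\log n\ge \beta$ and hence (\ref{kii}) upon letting $\beta\uparrow\beta_d$. To defeat the non-monotonicity of $M(n)$ that forced the detour through $\tilde M$ in Proposition \ref{po}, I would arrange that the good points produced at a sampling time persist on the inner boundary throughout the following block of times.

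First I would introduce a counting variable amenable to moment estimates. Rather than counting $\partial R(n)$-membership directly, I would count sites $x$ that are visited at least $p_n$ times by time $n$ while a distinguished neighbor $b\in\mathcal{N}(x)$ is kept out of the range up to a time slightly beyond $n$ (say up to $2n$); an unvisited neighbor forces $x\in\partial R(m)$ for every $m$ in that window, so such a point remains a frequently visited inner boundary point across the whole block. Call this count $\Xi_n$. Decomposing over the first-visit time $l=T_x^0$ and applying the strong Markov property at the successive hitting times $T_x^j$ together with translation invariance, each candidate site contributes, up to the cost of never hitting $b$ after its last return, a factor
$$P(T_0<T_b)^{\,p_n-1}\approx n^{-\beta/\beta_d};$$
summing over the order-$n$ fresh sites produces the first key lemma, a bound of the form $E[\Xi_n]\ge c\,n^{1-\beta/\beta_d}$, divided by at most a logarithmic factor in $d=2$ where keeping $b$ free until time $2n$ is not free of charge.

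The second, and decisive, key lemma would be a variance estimate $\mathrm{Var}(\Xi_n)\le C\,E[\Xi_n]$, equivalently $E[\Xi_n^2]\le (E[\Xi_n])^2+C\,E[\Xi_n]$. This amounts to decorrelating the events that two distinct sites $x,y$ are simultaneously good: conditioning on the excursions of the walk near $x$ and near $y$ and paying only for the passage of the walk between the two neighborhoods, the joint probability should factor into the product of the single-site probabilities, so that the off-diagonal double sum exceeds $(E[\Xi_n])^2$ only by a lower-order amount while the diagonal contributes $E[\Xi_n]$. Granting the two lemmas, Chebyshev's inequality applied along $n=2^k$ gives
$$P(\Xi_{2^k}=0)\le \frac{\mathrm{Var}(\Xi_{2^k})}{(E[\Xi_{2^k}])^2}\le \frac{C}{E[\Xi_{2^k}]}\le C'\,2^{-k(1-\beta/\beta_d)}(\log 2^k)^{O(1)},$$
which is summable in $k$; Borel--Cantelli then yields $\Xi_{2^k}\ge 1$ for all large $k$, and by the persistence built into the definition of $\Xi$ the surviving good point keeps $M(n)\ge p_n$ for every $n$ in the block $[2^k,2^{k+1})$, delivering the bound at all $n$ rather than merely along the subsequence.

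I expect the main obstacle to be the second-moment estimate. The required near-independence of the excursion counts at two distinct sites is delicate and genuinely dimension-dependent: in $d\ge3$ transience makes well-separated neighborhoods asymptotically independent and the passage cost between them is $O(1)$, whereas in $d=2$ recurrence forces the walk to return, spatial correlations decay only logarithmically, and one must carefully quantify the entropic cost of the unvisited neighbor over the full time window. This is exactly the point at which, in contrast to the dimension-free Proposition \ref{po}, the argument must split according to $d$; the first-moment lemma and the passage from the variance bound to the almost sure statement are comparatively routine once the decorrelation is in hand.
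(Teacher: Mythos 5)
Your overall architecture --- counting high-multiplicity sites that keep a distinguished neighbor unvisited so that they persist on the inner boundary through a block of times, then applying Chebyshev and Borel--Cantelli --- is the same as the paper's, and your first-moment estimate and the persistence/interpolation step are essentially sound (modulo the technical point that ``visited $p_n$ times by time $n$'' forces a truncation of the return times, which is why the paper works with $P(T_0<T_b\wedge k)$ rather than $P(T_0<T_b)$). The genuine gap is your second key lemma, the Poisson-like bound $\mathrm{Var}(\Xi_n)\le C\,E[\Xi_n]$, together with the geometric sampling $n=2^k$ it is meant to feed. In $d=2$ this bound is false, and not by a repairable margin. The culprit is exactly the correlation you flag and then assume away: for two counted sites $x,y$, the events ``$x+b$ unvisited up to time $2n$'' and ``$y+b'$ unvisited up to time $2n$'' are strongly positively correlated, because in the plane the probability of avoiding \emph{two} points at distance $O(\sqrt n)$ for a time of order $n$ is $\approx \pi/\log n$, the same leading order as avoiding a \emph{single} point (this is the content of the paper's two-point estimate (\ref{fo3})), whereas the product of the one-point probabilities is of order $(\pi/\log n)^2$. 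Carrying this through the pair sum, a typical off-diagonal pair has covariance of relative order $1/\log n$ times the product of the single-site probabilities, so $\mathrm{Var}(\Xi_n)\asymp (E[\Xi_n])^2/\log n$, which is far larger than $E[\Xi_n]$. Chebyshev then yields only $P(\Xi_{2^k}=0)\lesssim 1/\log 2^k\asymp 1/k$, and $\sum_k 1/k=\infty$: Borel--Cantelli fails along your subsequence, and no refinement of the pair estimates can rescue it, because the $1/\log(\mathrm{time})$ relative variance is intrinsic to $d=2$.

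This is precisely why the paper does two things you do not. First, it samples at doubly exponential times $u_n=\lceil\exp(n^2)\rceil$, so that a relative variance of order $\log n/n^2=\Theta(\log\log u_n/\log u_n)$ (Lemma \ref{hh}(i)) is summable in $n$; a relative variance of order $1/\log(\mathrm{time})$ can never be summable along geometric times. Second, even at that time scale, reaching $\log n/n^2$ requires a delicate cancellation of leading terms: the joint event must be compared against a product in which the second factor carries the \emph{joint} tail avoidance of both neighbors, so that the two-point bound $\pi/(n+1)^2$ of (\ref{fo3}) cancels against $\pi^2/n^4$ to within $O(n^{-10})$ per pair --- this is (\ref{nnn}) and the role of the event $A'''_{l,l',n}$. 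The paper's Remark \ref{bb} shows what happens if one instead factors the pair probability the way you propose, into a product of single-site quantities: one loses a factor $1+O(1/n)$ per pair, the relative variance becomes $C/n=C/\sqrt{\log u_n}$, and Borel--Cantelli still fails even at doubly exponential times. In $d\ge3$ your scheme is closer to viable --- transience makes the avoidance events genuinely decorrelate, and a bound of the form $\mathrm{Var}(\Xi_n)\le C(E[\Xi_n])^2 n^{-c}$ (though not your $C\,E[\Xi_n]$, which can also fail for small $\beta$) would survive geometric sampling --- but as written your proof collapses exactly at the dimension you yourself identify as the hard one.
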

Unlike Sections $4.3$ and $4.4$, the argument of Sections $4.1$ and $4.2$ will be performed independently of  the dimension $d$.  
In what follows, we discuss the proof for each fixed $\beta<\beta_d$. 
Let $$u_n=\lceil \exp(n^2)\rceil.$$ 
In this section, we will reduce the proof of Proposition \ref{po1} to two key lemmas given below 
(Lemmas \ref{hh+} and \ref{hh}). 
For $b\in {\cal N}(0)$ and 
$A \subset \mathbb{Z}^d$, we define $\partial_b A$ as
$$\partial_b A:=\{x\in A : x+b \not\in A \}.$$ 
We can extend the property (\ref{el*}) in the following way: 
for any intervals $I_0$, $I_1$, $I_2\subset {\mathbb N}\cup \{0\}$ with $I_0 \subset I_1 \subset I_2$, it holds that 
\begin{align}\label{el}
R(I_0)\cap \partial_b R (I_2)  \subset \partial_b R(I_1)\subset \partial R(I_1).
\end{align}
Let us define $\tilde{Q}_n$ as
\begin{align*}
\tilde{Q}_n:=\sharp \{x\in  R(u_{n-1}) \cap \partial_b R(u_n),
T_{x}^{\lceil\beta n^2\rceil}\le u_{n-1}\}. 
\end{align*}
We begin by providing a sufficient condition for the inequality (\ref{kii}) asserted in Proposition \ref{po1} to be true by means of $\tilde{Q}_n$. 
\begin{lem}\label{fu}
If 
\begin{align}\label{iii}
P(\tilde{Q}_n\ge 1 \quad\quad  \text{ for all but finitely many }n)=1, 
\end{align}
for any $\beta\in (0,\beta_d)$ then the inequality (\ref{kii}) holds. 
\end{lem}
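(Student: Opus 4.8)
The plan is to fix $\beta \in (0,\beta_d)$, prove $\liminf_{n} M(n)/\log n \ge \beta$ almost surely under hypothesis (\ref{iii}) for that $\beta$, and then let $\beta$ increase to $\beta_d$ along a countable sequence. The role of $\tilde{Q}_n$ is to manufacture, on the event $\{\tilde{Q}_n \ge 1\}$, a single lattice point that is simultaneously heavily visited and on the inner boundary — and crucially not only at the reference time $u_n$ but throughout the whole block $[u_{n-1},u_n]$. This block-propagation is exactly what upgrades the sparse subsequence information at the times $u_n$ into a genuine $\liminf$ over all $n$, in spite of $M(\cdot)$ being non-monotone.

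First I would unwind the definition of $\tilde{Q}_n$. On the event $\{\tilde{Q}_n\ge 1\}$ I choose a witness $x=x_n \in R(u_{n-1})\cap \partial_b R(u_n)$ with $T_x^{\lceil\beta n^2\rceil}\le u_{n-1}$. The hitting-time condition forces at least $\lceil\beta n^2\rceil+1$ visits to $x$ by time $u_{n-1}$ (the times $T_x^0<\cdots<T_x^{\lceil\beta n^2\rceil}$ are all $\le u_{n-1}$), so $K(u_{n-1},x)\ge \lceil\beta n^2\rceil+1\ge \beta n^2$.

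The heart of the argument is to carry both properties across the entire block. For any integer $m$ with $u_{n-1}\le m\le u_n$, I apply the extended elementary property (\ref{el}) with $I_0=[0,u_{n-1}]$, $I_1=[0,m]$, $I_2=[0,u_n]$: since $I_0\subset I_1\subset I_2$, it yields $R(u_{n-1})\cap\partial_b R(u_n)\subset \partial_b R(m)\subset \partial R(m)$, hence $x\in\partial R(m)$. Simultaneously $K(m,x)\ge K(u_{n-1},x)\ge \beta n^2$ because visits only accumulate, so $M(m)=\max_{y\in\partial R(m)}K(m,y)\ge K(m,x)\ge \beta n^2$ for every $m\in[u_{n-1},u_n]$. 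I expect this middle step to be the main obstacle, and I would stress that using $\partial_b$ rather than $\partial$ is precisely what makes it work: pinning the fixed empty direction $b$ (with $x+b\notin R(u_n)\supseteq R(m)$) keeps $x$ on the boundary at every intermediate time, whereas mere membership in $\partial R(m)$ could be destroyed as other neighbors get filled in at later times inside the block.

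Finally I would convert this into the $\liminf$. Under (\ref{iii}) there is almost surely a random $N_0$ with $\tilde{Q}_n\ge 1$ for all $n\ge N_0$, and the blocks $[u_{n-1},u_n]$, $n\ge N_0$, cover all large $m$; for such $m$, choosing $n=n(m)$ with $u_{n-1}\le m\le u_n$ gives $M(m)/\log m\ge \beta n^2/\log m\ge \beta n^2/\log u_n$. Since $u_n=\lceil\exp(n^2)\rceil$ gives $\log u_n=n^2+o(1)$, we have $n^2/\log u_n\to 1$, whence $\liminf_{m}M(m)/\log m\ge \beta$ almost surely. Intersecting the countably many almost-sure events attached to $\beta=\beta_d-1/k$ and letting $k\to\infty$ then upgrades this to $\liminf_{m}M(m)/\log m\ge \beta_d$ almost surely, which is exactly (\ref{kii}).
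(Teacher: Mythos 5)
Your proof is correct and follows essentially the same route as the paper: extract a witness $x$ with $K(u_{n-1},x)\ge\beta n^2$ from $\tilde{Q}_n\ge 1$, propagate membership in the inner boundary across the whole block $[u_{n-1},u_n]$ via (\ref{el}) (the paper phrases this through the auxiliary maximum $L(n):=\max_{x\in R(u_{n-1})\cap\partial_b R(u_n)}K(u_{n-1},x)$ rather than a single witness, which is only a cosmetic difference), use monotonicity of $K$ in time, and conclude from $\log u_n\sim n^2$ before letting $\beta\uparrow\beta_d$. Your remark that the fixed empty direction $b$ is what makes boundary membership survive at intermediate times is exactly the point of using $\partial_b$ in the definition of $\tilde{Q}_n$.
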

\begin{proof}
Set
$$L(n):=\max_{x \in  R(u_{n-1}) \cap \partial_b R(u_n)}K(u_{n-1},x).$$
Note that $L(n) \ge \beta n^2$ if $\tilde{Q}_n\ge 1$. 
Hence, it holds that
\begin{align*}
P(L(n)\ge\beta n^2\quad\quad  \text{ for all but finitely many }n)=1.
\end{align*}
Moreover, by (\ref{el}), for any $m$, $n \in{\mathbb{N}}$ with $u_{m-1}\le n < u_m$  
we have 
\begin{align}\label{ooo}
 R(u_{m-1}) \cap \partial_b R(u_m)
\subset \partial R(n)
\end{align}
and hence $L(m) \le \max_{x\in \partial R(n)}K(u_{m-1},x)\le \max_{x\in \partial R(n)}K(n,x)  \le M(n)$.   
Therefore, we conclude that for any $\beta<\beta_d$
\begin{align*}
\liminf_{n\to\infty} \frac{M(n)}{\beta \log n}\ge
\liminf_{m\to\infty} \frac{L(m)}{\beta \log u_m}\ge
1\quad \text{ a.s.,}
\end{align*}
as per (\ref{kii}) and as desired. 
\end{proof}
In order to verify the condition (\ref{iii}),
we introduce a new quantity $Q_n$ by modifying the definition of $\tilde{Q}_n$. 
To do this, we first introduce several notions. 
Set 
$T_{x,l}^0:=\inf\{j\ge l: S_j=x\}$ and 
$T_{x,l}^p:=\inf\{j>T_{x,l}^{p-1}:S_j=x\}$.
Note that $T_{x,0}^a=T_x^a$ for any $a\in{\mathbb{N}}$ and $x\in \mathbb{Z}^d$, 
and that $T_{S_l}^p=T_{S_l,l}^p$ holds for each $p$ on the event $S_l \notin R(l-1)$. 
Note that $T_{S_l,l}^p$ is a stopping time while $T_{S_l}^p$ is not. 
Although we can state the key lemmas without using this notion, 
we introduce it for later use. 
For $k\in {\mathbb{N}}$, let 
$h_k=\beta\log P(T_0<T_b\wedge k)+1$. 
Since $\lim_{k\to \infty}h_k=1-\beta/\beta_d>0$, we have $h_k>0$ 
for all sufficiently large $k$. 
We fix such a $k$ and simply denote $h_k$ by $h$ hereafter. 
Let $$I_n:=[\frac{u_{n-1}}{n^2}, u_{n-1}-k \lceil \beta_d n^2\rceil]\cap {\mathbb{N}}.$$ 
For any $l\in I_n$, we introduce the events $E_{l,n}$ and $A_{l,n}$ defined by 
\begin{align*}
E_{l,n}:=\{ T_{S_l,l}^{j}-T_{S_l,l}^{j-1}<k \text{ for any }1\le j\le \lceil \beta n^2\rceil \},
\end{align*}
and
\begin{align*}
A_{l,n}:=\{S_l \in R(l-1)^c \cap \partial_b R(u_{n}) \}\cap E_{l,n}.
\end{align*}
Then, we set
\begin{align}\label{yy}
Q_n:=\sum_{l \in I_n }1_{A_{l,n}}. 
\end{align}
Although $I_n$, $A_{l,n}$ and $Q_n$ depend on the choice of parameters $\beta$ and $k$, 
we do not indicate such dependence explicitly by symbols. 
By the definition of $I_n$,  
$I_n \subset [0,u_{n-1}] \cap (\mathbb{N} \cup \{0\})$ and 
$l+k \lceil \beta n^2\rceil \le u_{n-1}$ hold. 
These facts imply $Q_n\le \tilde{Q}_n$. 
As we will see, the verification of condition (\ref{iii}) is reduced to the following two estimates for $Q_n$. 
\begin{lem} \label{hh+}
Let $\beta<\beta_d$ and take $k\in{\mathbb{N}}$ so that $h=h_k>0$ as above.  
Then, there exists $c>0$ such that for any $n\in{\mathbb{N}}$, the following hold:  

(i) When $d=2$,
\begin{align*}
EQ_n \ge \frac{c\exp(hn^2-2n)}{n^4}.
\end{align*}

(ii) When $d \ge 3$,
\begin{align*}
EQ_n \ge c\exp(hn^2-2n).
\end{align*}
\end{lem}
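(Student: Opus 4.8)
The plan is to bound $EQ_n$ from below by restricting to a single, well-controlled summand configuration and estimating the probability of $A_{l,n}$ uniformly over $l \in I_n$. Since $Q_n = \sum_{l\in I_n} 1_{A_{l,n}}$, I have $EQ_n = \sum_{l\in I_n} P(A_{l,n})$, so the task is twofold: first, lower-bound each $P(A_{l,n})$, and second, count the number of available indices $l$, namely $\sharp I_n$, which is of order $u_{n-1}/n^2$ by the definition of $I_n$. The event $A_{l,n}$ is the intersection of three conditions on the walk seen from time $l$: that $S_l$ is genuinely new ($S_l\in R(l-1)^c$), that it lies on the $b$-boundary of the full range $R(u_n)$, and that the first $\lceil\beta n^2\rceil$ return excursions to $S_l$ are each short (the event $E_{l,n}$, i.e. each consecutive return happens within $k$ steps). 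The point of $E_{l,n}$ is precisely to make the return count $\ge \beta n^2$ happen through independent-looking short excursions, which is what yields the exponent $h = \beta\log P(T_0<T_b\wedge k)+1$.

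**First I would** decompose $A_{l,n}$ using the stopping-time structure at $T_{S_l,l}^p$. On the event $\{S_l\notin R(l-1)\}$ the stopping times $T_{S_l,l}^p$ coincide with the hitting times $T_{S_l}^p$, and by the strong Markov property the increments $T_{S_l,l}^j - T_{S_l,l}^{j-1}$ form (conditionally on the starting point) an i.i.d.-like sequence whose single-step return law is governed by $P(T_0<T_b\wedge k)$ once we condition appropriately. I would estimate $P(E_{l,n})\gtrsim P(T_0<T_b\wedge k)^{\lceil\beta n^2\rceil}$, which upon taking logs is essentially $\exp((h-1)n^2)$ up to the $+1$ absorbed into the count. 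The factor $e^{-2n}$ appearing in the statement should arise from the lower bound $\sharp I_n \gtrsim u_{n-1}/n^2 = e^{(n-1)^2}/n^2$ combined with the crude cost of being new and on the boundary, so I would track constants so that $P(A_{l,n})\gtrsim \exp(hn^2)/(\text{poly}(n)\cdot u_{n-1})$, and multiplying by $\sharp I_n$ gives the claimed $\exp(hn^2-2n)$ with the extra $n^{-4}$ in $d=2$ coming from the recurrence correction.

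**The dimension split** enters through the probability that $S_l$ is both new and remains on the $b$-boundary of $R(u_n)$ up to the terminal time $u_n$. In $d\ge 3$ transience makes the events $\{S_l\in R(l-1)^c\}$ and $\{S_l+b\notin R(u_n)\}$ cost only a bounded (dimension-dependent) constant, so the bound has no polynomial loss. In $d=2$ the walk is recurrent, so each of these "avoidance" requirements contributes a factor of order $1/\log(u_{n-1})\asymp 1/n^2$; two such factors (one for newness of $S_l$, one for $S_l+b$ staying outside the range after the excursions) produce the $n^{-4}$ in part (i). **The hard part will be** controlling the $b$-boundary condition jointly with the short-excursion event $E_{l,n}$ on the same timescale: the excursions in $E_{l,n}$ occur within $[l,u_{n-1}]$, but $\partial_b R(u_n)$ refers to the full range up to $u_n$, so I must argue that after the $\lceil\beta n^2\rceil$ short returns the walk can escape and avoid revisiting $S_l+b$ for the remaining time up to $u_n$ with probability bounded below (by a constant in $d\ge3$, by $\asymp 1/n^2$ in $d=2$), using the strong Markov property at $T_{S_l,l}^{\lceil\beta n^2\rceil}$ together with a standard last-exit / non-return estimate. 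Keeping these two windows logically independent via successive applications of the Markov property, while not losing more than a polynomial factor, is where the care is needed.
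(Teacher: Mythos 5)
Your probabilistic skeleton is the same as the paper's: you decompose $P(A_{l,n})$ by the strong Markov property into (i) the cost that $S_l$ is new and on the $b$-boundary up to time $l$ (by time reversal, $P(T_0\wedge T_b>l)$), (ii) the short-excursion factor $P(T_0<T_b\wedge k)^{\lceil\beta n^2\rceil}\asymp\exp((h-1)n^2)$, and (iii) the cost of avoiding $S_l+b$ after the excursions up to time $u_n$; you also correctly identify the dimension split (constants by transience for $d\ge3$; two recurrence factors $\asymp \pi/n^2$ for $d=2$, producing $n^{-4}$). This is exactly the content of Lemma \ref{subs} and Corollary \ref{al} of the paper, followed by summation over $l\in I_n$.

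However, your quantitative bookkeeping contains genuine errors, and since the lemma is a purely quantitative statement, this is fatal as written. First, $\sharp I_n$ is of order $u_{n-1}$, not $u_{n-1}/n^2$: the quantity $u_{n-1}/n^2$ is the \emph{left endpoint} of $I_n$, while its cardinality is $u_{n-1}-k\lceil\beta_d n^2\rceil-u_{n-1}/n^2\ge c\,u_{n-1}$ (this is (\ref{lo}) in the paper); using only $\sharp I_n\gtrsim u_{n-1}/n^2$ loses a factor $n^2$ and cannot reach the stated bound for $d\ge3$, which has no polynomial loss. Second, and worse, your claimed intermediate estimate $P(A_{l,n})\gtrsim \exp(hn^2)/(\mathrm{poly}(n)\,u_{n-1})$ is false: since $A_{l,n}$ forces each of the $\lceil\beta n^2\rceil$ excursions to return within $k$ steps while avoiding $S_l+b$, one has the matching \emph{upper} bound $P(A_{l,n})\le C\exp((h-1)n^2)$ (see (\ref{ju}) and (\ref{lu})), whereas your expression equals $\exp((h-1)n^2)\,e^{2n-1}/\mathrm{poly}(n)$, larger by the factor $e^{2n-1}$, so no amount of care in tracking constants can establish it. Third, the arithmetic is internally inconsistent: multiplying your two claimed bounds gives $\exp(hn^2)/(n^2\,\mathrm{poly}(n))$, not the $\exp(hn^2-2n)$ you assert. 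The correct accounting is simpler than the one you propose: the per-site probability is $\asymp\exp((h-1)n^2)$ (times an extra $n^{-4}$ when $d=2$), and the \emph{entire} factor $e^{-2n}$ comes from the count alone, via $\sharp I_n\ge c\,u_{n-1}\ge c\,e^{n^2-2n}$, so that $EQ_n\ge \sharp I_n\cdot\min_{l\in I_n}P(A_{l,n})$ yields the stated bounds.
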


\begin{lem} \label{hh}
Let $\beta<\beta_d$ and take $k\in{\mathbb{N}}$ so that $h=h_k>0$ as above.  
Then, there exists $C>0$ such that for any $n\in{\mathbb{N}}$, the following hold: 

(i) When $d=2$,
\begin{align*}
\mathrm{Var} (Q_n) \le C\bigg(\frac{\exp(hn^2-2n)}{n^4}\bigg)^2 \frac{\log n}{n^2}.
\end{align*}

(ii) When $d \ge 3$,
\begin{align*}
\mathrm{Var} (Q_n) \le C\exp(2hn^2-4n)\times \frac{1}{n^{10}}.
\end{align*}
\end{lem}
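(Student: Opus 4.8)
The plan is to estimate $\mathrm{Var}(Q_n)$ by the second moment method, writing
\begin{align*}
\mathrm{Var}(Q_n)=\sum_{l\in I_n}\mathrm{Var}(1_{A_{l,n}})+\sum_{\substack{l,l'\in I_n\\ l\neq l'}}\mathrm{Cov}(1_{A_{l,n}},1_{A_{l',n}}),
\end{align*}
and treating the diagonal and off-diagonal sums separately. The diagonal sum is bounded by $\sum_{l\in I_n}P(A_{l,n})=EQ_n$, and by Lemma \ref{hh+} this is of order $\exp(hn^2-2n)$ (divided by $n^4$ when $d=2$). Since $h>0$, the factor $\exp(hn^2)$ dominates every power of $n$, so $EQ_n$ is far smaller than the target bounds, which are of order $(EQ_n)^2$ times a negative power of $n$; hence the diagonal contribution is absorbed into the constant $C$. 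The whole problem therefore reduces to controlling the covariance sum.

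For the off-diagonal terms I take $l<l'$ (a factor $2$) and exploit that, on $E_{l,n}$, the $\lceil\beta n^2\rceil$ rapid returns to $S_l$ all occur within the time window $[l,l+k\lceil\beta n^2\rceil]$; thus each event $A_{l,n}$ does its local work inside a window of length less than $k\lceil\beta n^2\rceil$. I split the pairs accordingly. For \emph{near} pairs, with $l'-l\le k\lceil\beta n^2\rceil$ (whose two windows may overlap), there are at most $|I_n|\,k\lceil\beta n^2\rceil\le C n^2|I_n|$ of them, and the crude bound $\mathrm{Cov}(1_{A_{l,n}},1_{A_{l',n}})\le P(A_{l,n})$ gives a total of order $n^2\,EQ_n$, which is again negligible against the target by the same super-exponential domination used for the diagonal.

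The substantive part is the sum over \emph{far} pairs, $l'-l>k\lceil\beta n^2\rceil$, whose windows are disjoint. Here I decouple the two episodes with the strong Markov property at the stopping time $\rho:=T_{S_l,l}^{\lceil\beta n^2\rceil}$, the completion time of the first episode (this is a stopping time because $S_l$ is $\mathcal F_l$-measurable and $T_{S_l,l}^p=T_{S_l}^p$ on $\{S_l\notin R(l-1)\}$, and $\rho<l'$ on $E_{l,n}$ for far pairs). I decompose the constraint $S_l+b\notin R(u_n)$ into the part up to $\rho$, which is local and produces, together with $E_{l,n}$, the factor $P(T_0<T_b\wedge k)^{\lceil\beta n^2\rceil}$, and the part after $\rho$. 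Conditioning on $\mathcal F_\rho$, the post-$\rho$ factor is the probability, started from $S_\rho$, of simultaneously avoiding $S_l+b$ for the residual time and realizing the second-episode event for $S_{l'}$. The obligation to avoid $S_l+b$ only decreases this probability, so I bound it by the corresponding probability without that obligation and then by $\sup_z P^z$ of the shifted second-episode event. The aim is to show that this supremum equals $P(A_{l',n})$ up to a factor $1+o(1)$; the leading product $P(A_{l,n})P(A_{l',n})$ then cancels the subtracted term in the covariance, leaving only the correction, whose sum over far pairs must match the stated bound.

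The main obstacle is exactly this correction, that is, quantifying how far the two events are from independent. Two points must be avoided over the long horizon $[0,u_n]$ and two disjoint fresh points must be created, and the residual correlation is governed by hitting and return probabilities among $S_l$, $S_l+b$, $S_{l'}$ and $S_{l'}+b$. I would control these by Green's-function estimates, which is where the dimension enters. For $d\ge3$, transience makes the extra avoidance of $S_l+b$ and the dependence on the restart location $S_\rho$ cost only a factor $1+O(\,\mathrm{dist}^{-(d-2)})$, and summing the resulting two-point function over the far pairs in $I_n$ yields the strong saving $n^{-10}$. For $d=2$, recurrence forces the freshness and boundary-avoidance probabilities to carry logarithmic factors ($\sim 1/\log u_{n-1}\sim n^{-2}$), and the two-point sum over separations of the fresh points integrates to the extra factor $\log n/n^2$. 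I expect the $d=2$ bookkeeping—tracking these logarithms through the joint avoidance of both boundary points while summing the two-point function—to be the hardest step, the $d\ge3$ case being cleaner but still requiring care to reach the quantitative $n^{-10}$ decay.
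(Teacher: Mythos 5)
Your overall architecture (split pairs by time separation, crude bounds for close pairs, decoupling at the stopping time $\rho=T_{S_l,l}^{\lceil\beta n^2\rceil}$ for distant pairs) is the right one, but the central step of your far-pair argument would fail. You propose to discard the obligation to avoid $S_l+b$ after $\rho$ and to bound the post-$\rho$ factor by $\sup_z P^z(\text{second episode})\approx P(A_{l',n})$, expecting the product $P(A_{l,n})P(A_{l',n})$ to cancel in the covariance. It cannot: the event $A_{l,n}$ itself contains the constraint that the walk avoids $S_l+b$ during the long tail $[\rho,u_n]$, and by Corollary \ref{al} this contributes the factor $P(T_b>\cdot)$, which is $\asymp \pi/n^2$ when $d=2$ (and $P(T_b=\infty)<1$ when $d\ge3$). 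Once you drop that constraint, your upper bound on $P(A_{l,n}\cap A_{l',n})$ exceeds $P(A_{l,n})P(A_{l',n})$ by the reciprocal of this factor, i.e.\ by $\asymp n^2$ for $d=2$ and by the constant $1/P(T_b=\infty)>1$ for $d\ge3$. The covariance of a far pair is then bounded only by (a constant or $n^2$ times) $P(A_{l,n})P(A_{l',n})$, and summing over the $\asymp u_{n-1}^2$ far pairs yields $\mathrm{Var}(Q_n)\gtrsim (EQ_n)^2$ --- no decay in $n$ at all, which is useless for the Chebyshev/Borel--Cantelli step. The cancellation must come from \emph{keeping} the avoidance constraints on both sides and comparing joint versus product avoidance with high precision, not from dropping them.

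Concretely, the paper achieves this as follows. For $d\ge3$ it truncates in \emph{time}: $A'_{l,n}$ lives on $[0,l+\lceil u_{n-1}/n^{10}\rceil]$ and $A''_{l',n}$ on $[l'-\lceil u_{n-1}/n^{10}\rceil,u_n]$, so the two are exactly independent for $(l,l')\in J_{n,3}$, and each truncation changes the probability by only $O(\exp((h-1)n^2-cn^2))$ because $P(m\le T_b<\infty)=O(m^{1-d/2})$; the leading constants then cancel exactly, leaving an exponentially small remainder. For $d=2$ even this exact-independence scheme is insufficient: Remark \ref{bb} shows it leaves a relative error of order $1/n$, not summable. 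The paper instead retains the \emph{joint} avoidance of both $S_l+b$ and $S_{l'}+b$ over the common tail (the event $A'''_{l,l',n}$) and invokes the two-point estimate (\ref{fo3}), $P(T_b\wedge T_{x+b}>\lceil u_n/n\rceil)\le \pi/(n+1)^2+C\log n/n^4$; the whole point is that the constant $\pi/(n+1)^2$ (not $\pi/n^2$) combines with the middle-window factor via (\ref{nnn}) to make the relative error $O(\log n/n^2)$. Your sketch gestures at ``two-point functions'' and the factor $\log n/n^2$, but never identifies this estimate nor explains why the relative correction is $n^{-2}$ rather than the fatal $n^{-1}$. Two further omissions: your two-range split has no analogue of $J_{n,2}$ --- for separations between $k\lceil\beta_d n^2\rceil$ and $u_{n-1}/n^{10}$ the freshness window of $S_{l'}$ is too short for any product cancellation, and these pairs must be handled by counting as in Lemma \ref{ku}; and (\ref{fo3}) only holds for $|x|<n\sqrt{u_{n-1}}$, so widely separated points need a separate large-deviation argument (the event $D_2$ and (\ref{hhy})).
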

Now we give a proof of Proposition \ref{po1} by assuming Lemmas \ref{hh+} and \ref{hh} are true.  
\begin{proof}[Deduction of Proposition \ref{po1} from Lemmas \ref{hh+} and \ref{hh}]
If Lemmas \ref{hh+} and \ref{hh} hold, 
then we only need to prove the assumption of Lemma \ref{fu} to obtain Proposition \ref{po1}.  
Take $k\in {\mathbb{N}}$ and $h>0$ as above. 
By the Chebyshev inequality, we have
\begin{align}\label{ine1}
P(|Q_n-EQ_n|>\frac{EQ_n}{2})\le \frac{4\mathrm{Var} (Q_n)}{(EQ_n)^2}.
\end{align}
By Lemmas \ref{hh+} and \ref{hh}, we can see that there exists $C>0$ 
such that the following is true: 
 \begin{align*}
\frac{\mathrm{Var} (Q_n)}{(EQ_n)^2} 
\begin{cases}
&\le \frac{C\log n}{n^2}\quad \quad\quad \quad \text{if } d=2,\\
&\le \frac{C}{n^{10}} \quad \quad \quad \quad\quad \text{ if } d \ge 3.
\end{cases}
\end{align*}
As a result, the right hand side of (\ref{ine1}) is summable. 
% Now substitute the right hand side of  (\ref{ine1}) with $c=\frac{1}{2}$.  
Since $|b-a|\le \frac{a}{2}$ implies $b\ge \frac{a}{2}$ 
for $a,b\ge0$, the Borel-Cantelli lemma yields 
\begin{align}\label{rrr}
P(Q_n\ge\frac{1}{2}EQ_n\quad \quad \text{ for all but finitely many }n)=1.
\end{align} 
Since $h>0$, Lemma \ref{hh+} implies $EQ_n \ge 2$ for all sufficiently large $n$. 
Hence, the assertion of Lemma \ref{fu} holds by combining this fact with (\ref{rrr}). 
\end{proof}

\subsection{Preparations for the proof of Lemmas \ref{hh+} and \ref{hh}}
In this section, we estimate $P(A_{l,n})$ using the strong Markov property. 
For later use, we will consider more general events than $A_{l,n}$. 
For any $n', l , \tilde{n}\in {\mathbb{N}} \cup \{0\}$ with $n'\le l \le \tilde{n}$ and $n\in {\mathbb{N}}$, let
\begin{align*}
F_{n', l, \tilde{n},n}=\{S_l \in R[n',l-1]^c \cap \partial_b R[n',\tilde{n}]\} \cap E_{l,n},
\end{align*}
which we  will sometimes denote $F(n', l, \tilde{n},n)$ for typographical reasons. 
Note that $F_{0,l,u_n,n}=A_{l,n}$ holds. 
\begin{lem}\label{subs}
There are constants $c$, $C>0$ such that for any $n', l , \tilde{n}\in {\mathbb{N}} \cup \{0\}$ with $n'\le l \le \tilde{n}$ and $n,a \in {\mathbb{N}}$
with $l+k\lceil\beta_d n^2\rceil \le a\le \tilde{n}$
\begin{align*}
P(F_{n', l, \tilde{n},n} )
\le P(T_0<T_b\wedge k )^{\lceil \beta n^2\rceil}P(T_0 \wedge T_b >l-n')\times P(T_b>\tilde{n}-a)
\end{align*}
and 
\begin{align*}
P(F_{n', l, \tilde{n},n})
\ge P(T_0<T_b\wedge k )^{\lceil \beta n^2\rceil}P(T_0 \wedge T_b >l-n')\times P(T_b>\tilde{n}).
\end{align*}
\end{lem}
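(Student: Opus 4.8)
The plan is to split the trajectory at time $l$ and factor the probability through the Markov property, writing $F_{n',l,\tilde n,n}$ as the intersection of a \emph{past} event, measurable with respect to $\mathcal{F}_l=\sigma(S_0,\dots,S_l)$, and a \emph{future} event, measurable with respect to $\sigma(S_l,S_{l+1},\dots)$. Writing $x=S_l$ and recalling $\partial_b R[n',\tilde n]=\{y\in R[n',\tilde n]:y+b\notin R[n',\tilde n]\}$, the fact that $S_l=x\neq x+b$ lets me decompose the defining conditions as
\begin{align*}
G_{\mathrm{past}}&=\{x\notin R[n',l-1]\}\cap\{x+b\notin R[n',l-1]\},\\
G_{\mathrm{fut}}&=E_{l,n}\cap\{x+b\notin R[l+1,\tilde n]\}.
\end{align*}
By the Markov property at time $l$ together with translation invariance, the conditional probability $P(G_{\mathrm{fut}}\mid\mathcal F_l)$ equals, on $\{S_l=x\}$, a constant $c$ independent of $x$; explicitly $c=P(\tilde E\cap\{T_b>\tilde n-l\})$, where under $P=P^0$ I set $\tilde E=\{T_0^{j}-T_0^{j-1}<k,\ 1\le j\le\lceil\beta n^2\rceil\}$ with $T_0^0=0$. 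Hence $P(F_{n',l,\tilde n,n})=c\cdot P(G_{\mathrm{past}})$, and it remains to evaluate $P(G_{\mathrm{past}})$ exactly and to bound $c$ on both sides.

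For the past factor I would invoke the reversibility of simple random walk: reversing the segment $(S_{n'},\dots,S_l)$ produces a simple random walk $\hat S_i=S_{l-i}$, $0\le i\le l-n'$, started at $\hat S_0=S_l$, under which the two conditions in $G_{\mathrm{past}}$ become $\hat S_0\notin\{\hat S_1,\dots,\hat S_{l-n'}\}$ and $\hat S_0+b\notin\{\hat S_1,\dots,\hat S_{l-n'}\}$. Translation invariance then gives the exact identity $P(G_{\mathrm{past}})=P(T_0\wedge T_b>l-n')$.

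For the future factor I would apply the strong Markov property at the successive return times $T_0^{1},\dots,T_0^{\lceil\beta n^2\rceil}$ to $0$. On $\tilde E$ the excursion phase ends at $T_0^{\lceil\beta n^2\rceil}<k\lceil\beta n^2\rceil\le k\lceil\beta_d n^2\rceil\le a-l\le\tilde n-l$, using $\beta<\beta_d$ and the hypothesis $l+k\lceil\beta_d n^2\rceil\le a\le\tilde n$, so I may split $\{T_b>\tilde n-l\}$ into avoidance of $b$ up to $T_0^{\lceil\beta n^2\rceil}$ and avoidance afterwards. Since the excursions from $0$ are i.i.d., the event that each of the first $\lceil\beta n^2\rceil$ of them returns to $0$ within $k$ steps while avoiding $b$ has probability exactly $P(T_0<T_b\wedge k)^{\lceil\beta n^2\rceil}$, and a further strong Markov step at $T_0^{\lceil\beta n^2\rceil}$ (where the walk sits at $0$) leaves the residual factor $P\big(T_b>(\tilde n-l)-T_0^{\lceil\beta n^2\rceil}\big)$. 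Monotonicity of $s\mapsto P(T_b>s)$ together with $\tilde n-a<(\tilde n-l)-T_0^{\lceil\beta n^2\rceil}\le\tilde n$ then yields
\[
P(T_0<T_b\wedge k)^{\lceil\beta n^2\rceil}P(T_b>\tilde n)\le c\le P(T_0<T_b\wedge k)^{\lceil\beta n^2\rceil}P(T_b>\tilde n-a),
\]
and multiplying by $P(G_{\mathrm{past}})=P(T_0\wedge T_b>l-n')$ gives both asserted inequalities.

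The hard part will be the bookkeeping around the random endpoint $T_0^{\lceil\beta n^2\rceil}$ of the excursion phase: I must confirm that this phase fits inside the horizon $[l,\tilde n]$, so that the split of the $b$-avoidance event at $T_0^{\lceil\beta n^2\rceil}$ is legitimate, and that the leftover avoidance interval can be sandwiched between $\tilde n-a$ and $\tilde n$ uniformly over the realization — which is exactly where the constraint $l+k\lceil\beta_d n^2\rceil\le a\le\tilde n$ is used. The remaining ingredients, namely the exact factorization through the Markov property, the reversal identity for the past, and the i.i.d.\ excursion computation, are routine once this is in place.
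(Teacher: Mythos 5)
Your proof is correct and follows essentially the same route as the paper: the identical three-factor decomposition (avoidance before time $l$ evaluated exactly by time reversal as $P(T_0\wedge T_b>l-n')$, the i.i.d.\ excursion phase giving exactly $P(T_0<T_b\wedge k)^{\lceil\beta n^2\rceil}$, and the post-excursion avoidance of $b$ sandwiched between $P(T_b>\tilde n)$ and $P(T_b>\tilde n-a)$ using that $E_{l,n}$ forces the $\lceil\beta n^2\rceil$-th return to occur by time $a$). The only difference is the order of conditioning --- you apply the Markov property at ${\cal F}(l)$ first and then split the future at the return time, while the paper conditions first on ${\cal F}(T_{S_l,l}^{\lceil\beta n^2\rceil})$ and then on ${\cal F}(l)$ --- which is a purely presentational reordering.
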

\begin{proof}
We first remark that $l<T_{S_l,l}^{\lceil\beta n^2\rceil}$ 
holds and, hence, ${\cal F}(l) \subset {\cal F}(T_{S_l,l}^{\lceil\beta n^2\rceil})$. 
By taking a conditional expectation with respect to ${\cal F}(T_{S_l,l}^{\lceil\beta n^2\rceil})$, we obtain
\begin{align}\notag
P(F_{n', l, \tilde{n},n})
=&E[1{\{S_l\in R[n',l-1]^c \cap \partial_b R[n',T_{S_l,l}^{\lceil\beta n^2\rceil}] \}\cap E_{l,n}}\\
\label{t*}
&\times P(S_l\in \partial_b R[T_{S_l,l}^{\lceil\beta n^2\rceil},\tilde{n}]
|{\cal F}(T_{S_l,l}^{\lceil\beta n^2\rceil} ))].
\end{align}
On the event $E_{l,n}$, we have
\begin{align*}
0\le T_{S_l,l}^{\lceil\beta n^2\rceil} \le  k\lceil \beta_d n^2 \rceil+T_{S_l,l}^0.
\end{align*}
Since $l=T_{S_l,l}^0$, our choice of  $a$ and this inequality imply
\begin{align}\label{rr*}
0\le T_{S_l,l}^{\lceil\beta n^2\rceil} \le a.
\end{align}
The Markov property and the translation invariance for $S_l$ yield
\begin{align}\label{t*1}
 P(S_l\in \partial_b R[T_{S_l,l}^{\lceil\beta n^2\rceil},\tilde{n}]
|{\cal F}(T_{S_l,l}^{\lceil\beta n^2\rceil} ))
= P(0 \in \partial_b R(\tilde{n}-t))|_{t=T_{S_l,l}^{\lceil\beta n^2\rceil}}.
\end{align}
Substituting (\ref{t*1}) for (\ref{t*}) 
and keeping (\ref{rr*}) in mind, we obtain
\begin{align}\notag
P(F_{n', l, \tilde{n},n})
\le&P(\{S_l\in R[n',l-1]^c\cap \partial_b R[n',T_{S_l,l}^{\lceil\beta n^2\rceil}]\}
\cap E_{l,n})\\
\label{t1}
&\times P(0 \in \partial_b R(\tilde{n}-a)),
\end{align}
and 
\begin{align}\notag
P(F_{n', l, \tilde{n},n})
\ge&P(\{S_l\in R[n',l-1]^c \cap \partial_b R[n',T_{S_l,l}^{\lceil\beta n^2\rceil}]\}
\cap E_{l,n})\\
\label{t2}
&\times P(0 \in \partial_b R(\tilde{n})).
\end{align}
Thus, the proof is reduced to the estimate of the common first factor in the right hand side of (\ref{t1}) and (\ref{t2}). 
If we take a conditional expectation with respect to ${\cal F}(l)$, 
by the Markov property and the translation invariance for $S_l$, we obtain
\begin{align}\notag
&P(\{S_l\in R[n',l-1]^c \cap \partial_b R[n',T_{S_l,l}^{\lceil\beta n^2\rceil}]\}
\cap E_{l,n})\\
\label{t3}
= &P(S_l\in R[n',l-1]^c \cap \partial_b R[n',l])
\times P(\{0 \in \partial_b R(T_{0}^{\lceil \beta n^2\rceil})\}
\cap E_{0,n}).
\end{align}
By the choice of $h$, it holds that
\begin{align}
\label{t4}
P(\{0 \in \partial_b  R(T_{0}^{\lceil \beta n^2\rceil})\}
\cap E_{0,n})
=P(T_0<T_b\wedge k )^{\lceil \beta n^2\rceil},
\end{align}
where there exist $c$, $C>0$ such that for any $n\in \mathbb{N}$
\begin{align}\label{t4*}
c\exp((h-1)n^2)
\le P(T_0<T_b\wedge k )^{\lceil \beta n^2\rceil}\le C\exp((h-1)n^2).
\end{align}
By considering a time-reversal, we obtain
\begin{align}
\label{t5}
 P(S_l \in R[n',l-1]^c \cap \partial_b R[n',l])
=P(0\in R[1,l-n']^c \cap \partial_b R(l-n')).
\end{align}
In addition, for $m\in \mathbb{N}$,  we have 
$P(0\in R[1,m]^c \cap \partial_b R(m))=P(T_0 \wedge T_b >m)$, and 
$P(0 \in \partial_b R(m))=P(T_b>m)$. 
Therefore, by (\ref{t1}), (\ref{t2}), (\ref{t3}), (\ref{t4}) and (\ref{t5}), the desired formulas hold.
\end{proof}
\begin{rem}\label{pl}
We substitute $T_{S_l,l}^{\lceil\beta n^2\rceil}$ for $\tilde{n}$ of $F_{n', l, \tilde{n},n}$. 
That is, for any $n', l \in {\mathbb{N}} \cup \{0\}$ with $n'\le l $,  we write
\begin{align*}
F(n', l, T_{S_l,l}^{\lceil\beta n^2\rceil},n)
=\{S_l \in R[n',l-1]^c \cap \partial_b R[n',T_{S_l,l}^{\lceil\beta n^2\rceil}]\} \cap E_{l,n} .
\end{align*}
By the same argument, we obtain the following: for any 
$n', l \in {\mathbb{N}} \cup \{0\}$ with $n'\le l $ 
\begin{align}\label{ppp*}
P(F(n', l, T_{S_l,l}^{\lceil\beta n^2\rceil},n) )
\le P(T_0<T_b\wedge k )^{\lceil \beta n^2\rceil}P(T_0 \wedge T_b >l-n').
\end{align}
(See the argument after (\ref{t3}).)
\end{rem}

\begin{cor}\label{al}
For any $l\in I_n$ and all sufficiently large $n\in \mathbb{N}$ 
with $u_n/n^{11}\le u_n-u_{n-1}$, 
\begin{align}\notag
P(A_{l,n} )
\le &P(T_0 \wedge T_b >l)
\times  P(T_0<T_b \wedge k )^{\lceil\beta n^2\rceil}\\
\label{hy*}
&\times P(T_b >\frac{u_n}{n^{11}})
\end{align}
and 
\begin{align}\notag
P(A_{l,n})
\ge &P(T_0 \wedge T_b >l)
\times P(T_0<T_b \wedge k )^{\lceil\beta n^2\rceil}\\
\label{hy}
&\times P(T_b >u_n).
\end{align}
\end{cor}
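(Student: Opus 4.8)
The plan is to derive the two bounds for $P(A_{l,n})$ directly from Lemma \ref{subs} by specializing the general parameters $n', l, \tilde n, a$ to the situation relevant for $A_{l,n}$. Recall that $A_{l,n} = F_{0,l,u_n,n}$, so I would apply Lemma \ref{subs} with $n'=0$ and $\tilde n = u_n$. Under these substitutions the middle factor $P(T_0 \wedge T_b > l-n')$ becomes $P(T_0 \wedge T_b > l)$ and the hitting-probability factor $P(T_0 < T_b \wedge k)^{\lceil \beta n^2 \rceil}$ is unchanged, matching the target expressions. What remains is to handle the third factor, which differs between the hypothesis of Lemma \ref{subs} and the claimed bounds.

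For the lower bound (\ref{hy}) I expect this to be immediate: taking $n'=0$ and $\tilde n = u_n$ in the second (lower) inequality of Lemma \ref{subs} directly gives the third factor $P(T_b > \tilde n) = P(T_b > u_n)$, which is exactly the claimed bound, so (\ref{hy}) follows with no extra work. The only thing I should check is that the constraint $n' \le l \le \tilde n$ holds, i.e. $0 \le l \le u_n$; this is guaranteed since $l \in I_n \subset [u_{n-1}/n^2, u_{n-1}] \subset [0,u_n]$.

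For the upper bound (\ref{hy*}) the extra step is to choose the free parameter $a$ in the upper inequality of Lemma \ref{subs} appropriately, since that bound produces the factor $P(T_b > \tilde n - a) = P(T_b > u_n - a)$. I want to match the target $P(T_b > u_n/n^{11})$, so I would like $u_n - a \ge u_n/n^{11}$, i.e. $a \le u_n - u_n/n^{11} = u_n(1 - n^{-11})$, together with the admissibility constraint from Lemma \ref{subs} that $l + k\lceil \beta_d n^2 \rceil \le a \le \tilde n = u_n$. I would pick $a = u_n - \lceil u_n/n^{11} \rceil$ (or any value in the allowed window) and then verify both that such an $a$ exists and that $P(T_b > u_n - a) \le P(T_b > u_n/n^{11})$ by monotonicity of $m \mapsto P(T_b > m)$. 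The existence of $a$ is where the hypothesis $u_n/n^{11} \le u_n - u_{n-1}$ and the phrase ``all sufficiently large $n$'' enter: I need the lower admissibility bound $l + k\lceil \beta_d n^2\rceil$ to sit below my chosen $a$. Since $l \le u_{n-1}$, it suffices that $u_{n-1} + k\lceil \beta_d n^2\rceil \le u_n - u_n/n^{11}$, and the assumed inequality $u_n - u_{n-1} \ge u_n/n^{11}$ controls the dominant gap $u_n - u_{n-1}$, while the term $k\lceil \beta_d n^2\rceil$ is negligible compared to $u_n = \lceil \exp(n^2)\rceil$ for large $n$ (recall $u_n/n^{11} \ge u_{n-1}$ as well, dwarfing the polynomial correction).

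The main obstacle, such as it is, is purely bookkeeping in the upper bound: confirming that the interval of admissible values of $a$ is nonempty for all large $n$, i.e. that $l + k\lceil \beta_d n^2\rceil \le u_n - \lceil u_n/n^{11}\rceil$. This is a routine size comparison between $\exp(n^2)$-scale quantities and the polynomial-scale term $k\lceil \beta_d n^2\rceil$, and it is precisely the reason the hypothesis $u_n/n^{11} \le u_n - u_{n-1}$ and the qualifier ``all sufficiently large $n$'' are imposed. Once $a$ is fixed and the window is shown nonempty, the corollary follows by applying Lemma \ref{subs} and discarding the (trivially $\le 1$) monotonicity estimate $P(T_b > u_n - a) \le P(T_b > u_n/n^{11})$.
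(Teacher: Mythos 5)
Your proposal is correct and follows essentially the same route as the paper: both apply Lemma \ref{subs} to $A_{l,n}=F_{0,l,u_n,n}$ with $n'=0$, $\tilde n=u_n$, and the only (cosmetic) difference is the choice of the free parameter $a$ — the paper takes $a=u_{n-1}$ (admissible by the definition of $I_n$) and then uses the hypothesis $u_n/n^{11}\le u_n-u_{n-1}$ with monotonicity of $m\mapsto P(T_b>m)$ to pass from $P(T_b>u_n-u_{n-1})$ to $P(T_b>u_n/n^{11})$, whereas you take $a=u_n-\lceil u_n/n^{11}\rceil$ and use the same hypothesis to check admissibility. Both arguments are valid and of identical substance.
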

\begin{proof}
For $l\in I_n$
\begin{align*}
l=T_{S_l,l}^0 \le u_{n-1}- k\lceil \beta_d n^2\rceil
\end{align*}
holds. 
Therefore, by using Lemma \ref{subs} with $F_{0,l,u_n,n}=A_{l,n}$ and substituting $u_{n-1}$ for  $a$ in the assumption of Lemma \ref{subs} we obtain (\ref{hy}) and
\begin{align*}
P(A_{l,n} )
\le &P(T_0 \wedge T_b >l)
\times  P(T_0<T_b \wedge k )^{\lceil\beta n^2\rceil}\\
&\times P(T_b >u_n-u_{n-1}).
\end{align*}
Therefore, we obtain (\ref{hy*}) for all sufficiently large $n\in \mathbb{N}$.
\end{proof}

\subsection{Proof of Lemmas \ref{hh+} and \ref{hh} for $d\ge3$}\label{e1} 
By Corollary \ref{al},  
we obtain the following estimate of $P(A_{l,n})$.  
\begin{lem}
There exist constants $C$, $c>0$ such that for any $n\in \mathbb{N}$ and $l\in I_n$
\begin{align}\label{ju}
P(A_{l,n})
\le & C\exp((h-1)n^2)\\
\label{ju*}
P(A_{l,n})
\ge &c\exp((h-1)n^2).
\end{align}
Moreover, for any $n\in \mathbb{N}$ and $l\in I_n$
\begin{align}\notag
P(A_{l,n})
= &P(T_0\wedge T_b=\infty)\times  P(T_0<T_b \wedge k )^{\lceil\beta n^2\rceil}\times P(T_b=\infty)\\
\label{ju**}
&+O(\exp((h-1)n^2-cn^2)). 
\end{align}
\end{lem}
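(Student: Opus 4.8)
The plan is to deduce all three estimates directly from Corollary \ref{al}, using sharp asymptotics for the three hitting-probability factors that appear as bounds there. The central observation is that Corollary \ref{al} sandwiches $P(A_{l,n})$ between two products, each of which is a product of three factors: a factor $P(T_0\wedge T_b>l)$ (or with argument replaced by the relevant range), the power factor $P(T_0<T_b\wedge k)^{\lceil\beta n^2\rceil}$, and a factor of the form $P(T_b>m)$ for some large $m$. Since we are in dimension $d\ge 3$, the walk is transient, so each of $P(T_0\wedge T_b>m)$ and $P(T_b>m)$ converges to a strictly positive limit as $m\to\infty$, namely $P(T_0\wedge T_b=\infty)$ and $P(T_b=\infty)$ respectively. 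This is precisely what makes the $d\ge 3$ case cleaner than $d=2$: the boundary factors do not decay polynomially but stabilize to constants.

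First I would record the two-sided bound \eqref{t4*}, already established inside the proof of Lemma \ref{subs}, which gives $c\exp((h-1)n^2)\le P(T_0<T_b\wedge k)^{\lceil\beta n^2\rceil}\le C\exp((h-1)n^2)$. For \eqref{ju} I would start from \eqref{hy*}, bound $P(T_0\wedge T_b>l)\le 1$ and $P(T_b>u_n/n^{11})\le 1$, and apply the upper half of \eqref{t4*}. For \eqref{ju*} I would start from \eqref{hy}, and here the key point is to bound the boundary factors \emph{below} by positive constants: since $l\le u_{n-1}$ and the walk is transient, $P(T_0\wedge T_b>l)\ge P(T_0\wedge T_b=\infty)>0$, and likewise $P(T_b>u_n)\ge P(T_b=\infty)>0$; combining with the lower half of \eqref{t4*} gives \eqref{ju*} with a suitably small $c$.

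For the refined identity \eqref{ju**} I would quantify the rate at which the transient boundary factors approach their limiting constants. The plan is to use the transient decay estimate $P(T_b\le \infty)-P(T_b=\infty)=P(m<T_b<\infty)\le P(T_b>m)-P(T_b=\infty)$, controlled via the Green's-function/return-probability tail, which for $d\ge 3$ decays polynomially in $m$; taking $m=l\ge u_{n-1}/n^2$ and $m=u_n$, both of which are at least $\exp(cn^2)$ in size, shows that each boundary factor equals its limit up to an error that is super-exponentially small, in particular of size $O(\exp(-cn^2))$ relative to the main term. Writing $P(A_{l,n})$ as the product of the three limiting constants plus cross terms, and bounding each cross term by (main term)$\times O(\exp(-cn^2))$, yields \eqref{ju**}. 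Expanding a product of three factors of the form $(\text{const}+\text{error})$ and checking that every error term is dominated by $\exp((h-1)n^2-cn^2)$ is the one genuinely bookkeeping-heavy step.

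The main obstacle I expect is not conceptual but quantitative: producing an explicit super-polynomial (indeed super-exponential) bound on $|P(T_b>m)-P(T_b=\infty)|$ and on $|P(T_0\wedge T_b>m)-P(T_0\wedge T_b=\infty)|$ that is strong enough to be absorbed into the $O(\exp((h-1)n^2-cn^2))$ error. Because the arguments $l$ and $u_n$ are at least $\exp(cn^2)$, even a crude polynomial tail bound on the transient hitting time suffices, so the difficulty is mainly in stating the tail estimate cleanly and verifying that the chosen $c>0$ in the exponent can be taken uniform in $l\in I_n$; once that uniformity is in hand, the three displayed inequalities follow immediately.
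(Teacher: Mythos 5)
Your proposal is correct and takes essentially the same route as the paper: both arguments rest on the sandwich of Corollary \ref{al}, the two-sided bound (\ref{t4*}), and polynomial tail estimates for the transient hitting times (the paper obtains $P(m\le T_0<\infty)=O(m^{-d/2+1})$ from the local CLT and transfers it to $T_b$ and $T_0\wedge T_b$ via (\ref{kkk})), evaluated at arguments of size at least $\exp(cn^2)$ so that each boundary factor equals its limit up to $O(\exp(-cn^2))$. The only organizational difference is that the paper establishes (\ref{ju**}) first and then reads off (\ref{ju}) and (\ref{ju*}) from it together with (\ref{t4*}), whereas you prove the two crude bounds directly from the sandwich using monotonicity and positivity of the limiting constants; this difference is immaterial.
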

\begin{proof}
Since (\ref{t4*}) and (\ref{ju**}) yield (\ref{ju}) and  (\ref{ju*}), 
we only need to prove (\ref{ju**}). 
First, we introduce some estimates of hitting times. 
Since $P(S_n=0)=O(n^{-\frac{d}{2}})$, we obtain $P(T_0=n)=O(n^{-\frac{d}{2}})$ 
and, hence, for $d\ge 3$ and $b\in {\cal N}(0)$,
\begin{align}\label{g7}
P(n\le T_0< \infty)&\le \sum_{m=n}^{\infty} O(n^{-\frac{d}{2}} )=O( n^{-\frac{d}{2}+1}).
\end{align}
In addition, by the Markov property and the translation invariance for $b'$ we have 
 \begin{align}\label{kkk}
P(T_0 \ge a+1) 
=\frac{1}{2d}\sum_{b'\in {\cal N}(0)}P^{b'}(T_0 \ge a)
=\frac{1}{2d}\sum_{b'\in {\cal N}(0)}P(T_{-b'} \ge a)
=P(T_{b}\ge a),
\end{align}
for $a\in \mathbb{N}$. 
Hence, (\ref{g7}) yields 
\begin{align}\label{g8}
P(n\le T_b<\infty)&=O(n^{-\frac{d}{2}+1}).
\end{align}
Moreover, it holds that 
\begin{align*}
&P(n\le T_0\wedge T_b<\infty)
\le P(\{n\le T_0<\infty\}\cup\{ n\le T_b<\infty\})\\
=&P(n\le T_0<\infty)+P(n\le T_b<\infty),
\end{align*}
and so with (\ref{g7}) and (\ref{g8}),
\begin{align}
\label{g9}
P(n\le T_0\wedge T_b<\infty)&= O(n^{-\frac{d}{2}+1}).
\end{align}
Therefore, by (\ref{g7}), (\ref{g8}) and (\ref{g9}) there exists $c>0$ for any $n\in \mathbb{N}$ and $l \in I_n$
\begin{align}
\label{bb0}
&P(T_0 \wedge T_b>l )
= P(T_0\wedge T_b=\infty)+O(\exp(-cn^2)),\\
\label{bb1}
&P(T_b> u_{n})
=P(T_b=\infty)+O(\exp(-cn^2)),\\
\label{bb2}
&P(T_b >\frac{u_n}{n^{11}})
= P(T_b=\infty)+O(\exp(-cn^2)).
\end{align}
Substituting (\ref{bb0}), (\ref{bb1}) and (\ref{bb2}) for the right hand sides of (\ref{hy*}) and (\ref{hy}), 
we obtain the desired formula. 
\end{proof}
\begin{proof}[Proof of Lemma \ref{hh+} for $d\ge3$]
Since for all sufficiently large $n\in \mathbb{N}$ 
\begin{align}\label{lo}
\sharp I_n\ge u_{n-1}-k\lceil \beta_d n^2\rceil -\frac{u_{n-1}}{n^2}\ge c u_{n-1}
\end{align}
holds, by (\ref{ju*}) and the definition of $Q_n$ in (\ref{yy}), we obtain
\begin{align*}
EQ_n\ge \sharp I_n \times \min_{l\in I_n} P(A_{l,n})
 \ge c\exp(hn^2-2n),
\end{align*}  
as required.  
\end{proof}

To prove Lemma \ref{hh}, 
we decompose $I_n\times I_n$ into three parts $J_{n,j}$, $j = 1,2,3$ defined by
\begin{align*}
&J_{n,1}:=\{(l,l')\in I_n\times I_n:0\le l'-l\le k \lceil \beta_d n^2\rceil \},\\ 
&J_{n,2}:=\{(l,l')\in I_n\times I_n:k \lceil \beta_d n^2\rceil < l'-l\le 2\lceil \frac{u_{n-1}}{n^{10}} \rceil \},\\ 
&J_{n,3}:=\{(l,l')\in I_n\times I_n:2\lceil \frac{u_{n-1}}{n^{10}} \rceil< l'-l \}. 
\end{align*}
For all sufficiently large $n\in \mathbb{N}$, $k \lceil \beta_d n^2\rceil < 2\lceil u_{n-1}/n^{10} \rceil$ holds and hence 
$J_{n,2}$ is non-empty.  
By a simple computation,
\begin{align}\notag
\mathrm{Var} (Q_n)=&EQ_n^2-(EQ_n)^2\\
\notag
\le &2\sum_{l,l' \in I_n,l\le l'}
(E[1_{A_{l,n}}1_{A_{l',n}}]-E[1_{A_{l,n}}]E[1_{A_{l',n}}])\\
\notag
\le &2(\sum_{(l,l') \in J_{n,1}} E[1_{A_{l,n}}]
+\sum_{(l,l')  \in J_{n,2}}
E[1_{A_{l,n}}1_{A_{l',n}}]\\
\label{formula2}
+&\sum_{(l,l') \in J_{n,3}}
(E[1_{A_{l,n}}1_{A_{l',n}}]-E[1_{A_{l,n}}]E[1_{A_{l',n}}])).
\end{align}
\begin{lem}\label{ku}
There exists $C>0$ such that for any $n\in \mathbb{N}$ 
\begin{align}\label{oo}
\sum_{(l,l') \in J_{n,1}} E[1_{A_{l,n}}]\le Cn^2\exp(hn^2-2n) 
\end{align}
and 
\begin{align}\label{o1}
\sum_{(l,l')  \in J_{n,2}}
E[1_{A_{l,n}}1_{A_{l',n}}]
\le C\exp(2hn^2-4n)\times \frac{1}{n^{10}}.
\end{align} 
\end{lem}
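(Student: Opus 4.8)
The two bounds have different characters: (\ref{oo}) is a crude first–moment sum over the thin diagonal strip $J_{n,1}$, while (\ref{o1}) is the genuine second–moment estimate that exploits the temporal separation built into $J_{n,2}$. For (\ref{oo}), since the summand $E[1_{A_{l,n}}]=P(A_{l,n})$ depends only on $l$, I would first count, for each fixed $l$, the number of admissible $l'$: by the definition of $J_{n,1}$ this is at most $k\lceil\beta_d n^2\rceil+1=O(n^2)$. Hence
$$\sum_{(l,l')\in J_{n,1}}E[1_{A_{l,n}}]\le Cn^2\sum_{l\in I_n}P(A_{l,n})=Cn^2\,EQ_n.$$
It then remains to note $EQ_n\le C\exp(hn^2-2n)$, which follows by combining the uniform bound (\ref{ju}), namely $P(A_{l,n})\le C\exp((h-1)n^2)$, with $\sharp I_n\le u_{n-1}\le C\exp(n^2-2n)$. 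This gives (\ref{oo}).

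For (\ref{o1}) the plan is to establish the uniform pointwise estimate
$$P(A_{l,n}\cap A_{l',n})\le C\exp(2(h-1)n^2)\qquad\text{for all }(l,l')\in J_{n,2},$$
and then to sum it: since $\sharp J_{n,2}\le \sharp I_n\cdot 2\lceil u_{n-1}/n^{10}\rceil\le Cu_{n-1}^2/n^{10}\le C\exp(2n^2-4n)/n^{10}$, multiplying the count by the pointwise bound yields exactly the right–hand side of (\ref{o1}). The point of restricting to $J_{n,2}$ is that $l'-l>k\lceil\beta_d n^2\rceil$, so on the event $E_{l,n}$ the stopping time $\tau_l:=T_{S_l,l}^{\lceil\beta n^2\rceil}$ satisfies $\tau_l<l+k\lceil\beta n^2\rceil<l'$; the two revisit blocks are thus carried by disjoint time windows.

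To prove the pointwise bound I would proceed in three moves. First, applying (\ref{el}) with $I_0=\{l\}$ (resp.\ $\{l'\}$), $I_1=[0,\tau_l]$ (resp.\ $[0,\tau_{l'}]$) and $I_2=[0,u_n]$, where $\tau_{l'}:=T_{S_{l'},l'}^{\lceil\beta n^2\rceil}$, I replace the events by the larger $A_{l,n}\subseteq G_l:=F(0,l,\tau_l,n)$ and $A_{l',n}\subseteq G_{l'}:=F(0,l',\tau_{l'},n)$, noting $G_l\in{\cal F}(\tau_l)$ with $\tau_l<l'$. Second, conditioning on ${\cal F}(l')$ and splitting the condition defining $G_{l'}$ into its past part ($S_{l'}$ new and $S_{l'}+b$ avoided up to time $l'$) and its future part (the return structure $E_{l',n}$ together with avoidance of $S_{l'}+b$ up to $\tau_{l'}$), the strong Markov property and translation invariance evaluate the future factor exactly as $P(T_0<T_b\wedge k)^{\lceil\beta n^2\rceil}$, precisely the computation leading to (\ref{t4}). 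Third, I relax the remaining ${\cal F}(l')$–measurable past condition so that it refers only to $R[\tau_l,\cdot]$ rather than $R[0,\cdot]$ (an enlargement of the event, hence an upper bound), apply the strong Markov property at $\tau_l$ together with the time–reversal identity $P(0\in R[1,m]^c\cap\partial_b R(m))=P(T_0\wedge T_b>m)\le1$, and are left with $P(G_l)$, which by (\ref{ppp*}) is at most $P(T_0<T_b\wedge k)^{\lceil\beta n^2\rceil}P(T_0\wedge T_b>l)\le P(T_0<T_b\wedge k)^{\lceil\beta n^2\rceil}$. Combining the two extracted factors and invoking (\ref{t4*}) gives $C\exp(2(h-1)n^2)$.

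The main obstacle is the coupling between the two blocks created by the ``new site'' requirements $S_l\in R(l-1)^c$, $S_{l'}\in R[0,l'-1]^c$ and by the single global $b$–avoidance up to time $u_n$: these are conditions on the entire trajectory, so the events are not literally independent. The devices that resolve this are (a) the inclusion (\ref{el}), which localizes each block's $b$–boundary condition to its own disjoint time window, and (b) relaxing the past–referring new–site and avoidance constraints to reference only $[\tau_l,\cdot]$, so that the strong Markov property at $\tau_l$ decouples the blocks at the harmless cost of the factor $P(T_0\wedge T_b>l'-\tau_l)\le1$. I would take care to use the separation $l'-l>k\lceil\beta_d n^2\rceil$ exactly where it guarantees $\tau_l<l'$, since this is what makes $G_l\in{\cal F}(l')$ and legitimizes the conditioning on ${\cal F}(l')$.
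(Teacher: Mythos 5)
Your proof is correct, and for (\ref{oo}) it coincides with the paper's: both count $\sharp J_{n,1}\le k\lceil\beta_d n^2\rceil u_{n-1}$ and multiply by the uniform bound (\ref{ju}). For (\ref{o1}) the skeleton is also the same --- a per-pair bound $P(A_{l,n}\cap A_{l',n})\le C\exp(2(h-1)n^2)$ multiplied by $\sharp J_{n,2}\le Cu_{n-1}^2/n^{10}$ --- but the decoupling mechanism differs. The paper enlarges $A_{l,n}$ into the event $\tilde A_{l,n}=F(l,l,T_{S_l,l}^{\lceil\beta n^2\rceil},n)$ of (\ref{f1}), which discards all reference to the past and is measurable with respect to $\sigma\{S_j-S_l : j\in[l,\,l+k\lceil\beta n^2\rceil]\}$; for $(l,l')\in J_{n,2}$ the two increment windows are disjoint, so $\tilde A_{l,n}$ and $\tilde A_{l',n}$ are literally independent and (\ref{hj}) finishes the estimate in one line. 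You instead keep the past-referencing events $G_l=F(0,l,\tau_l,n)$, which are not independent, and recover the product bound by conditioning: first on ${\cal F}(l')$, extracting the factor $P(T_0<T_b\wedge k)^{\lceil\beta n^2\rceil}$ exactly as in (\ref{t4}), then discarding the leftover ${\cal F}(l')$-measurable coupling event and applying (\ref{ppp*}) and (\ref{t4*}) to $P(G_l)$. (Your third step is actually heavier than necessary: since the leftover past condition is an event, bounding its indicator by one already gives $E[1_{G_l}1_{\{\cdot\}}]\le P(G_l)$, with no need for the strong Markov property at $\tau_l$ and time reversal.) Both routes use the separation $l'-l>k\lceil\beta_d n^2\rceil$ in the same place, namely to guarantee $\tau_l<l'$ on $E_{l,n}$ so that the two blocks live in disjoint time windows. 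The paper's choice of forward-looking events is shorter; your sequential-conditioning argument is essentially the device the paper deploys later for the $J_{n,3}$ sum (and in the $d=2$ case), so it is a sound, if slightly more laborious, alternative here.
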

\begin{rem}\label{gh}
This Lemma also holds for $d=2$ by the same proof. 
\end{rem}
\begin{proof}
First, we show (\ref{oo}). 
By the definition, we have $\sharp J_{n,1} \le k\lceil \beta_d n^2\rceil u_{n-1}$. 
Thus, (\ref{ju}) yields
\begin{align*}
\sum_{(l,l') \in J_{n,1}} E[1_{A_{l,n}}]
\le \sharp J_{n,1} \times \max_{l\in I_n}P(A_{l,n})
\le Cn^2\exp(hn^2-2n) .
\end{align*} 
Hence, we obtain (\ref{oo}).
To show (\ref{o1}), let us introduce additional notations. 
We define 
\begin{align}
\label{f1}
\tilde{A}_{l,n}:=\{ S_l\in  \partial_b R[l+1,T_{S_l,l}^{\lceil \beta n^2\rceil}] \}
\cap E_{l,n}.
\end{align}
Note that 
\begin{align*}
&\tilde{A}_{l,n}=F(l, l, T_{S_l,l}^{\lceil\beta n^2\rceil},n),\\
&\tilde{A}_{l,n}\cap \{ S_l\in  R(l-1)^c\cap \partial_b R(l) \}\cap \{ S_l\in \partial_b R[T_{S_l,l}^{\lceil \beta n^2\rceil},u_n] \}
=A_{l,n}
\end{align*}
and
\begin{align*}
\tilde{A}_{l,n}\in \sigma \{ S_j-S_l: j\in  [l,T_{S_l,l}^{\lceil \beta n^2\rceil}]\}.
\end{align*}
By Remark \ref{pl}, we obtain 
\begin{align}\label{hj}
P(\tilde{A}_{l,n})
=P(F(l, l, T_{S_l,l}^{\lceil\beta n^2\rceil},n))
\le C\exp((h-1)n^2).
\end{align}
We obtain (\ref{o1}) as follows:
by the definition of $A_{l,n}$ and $\tilde{A}_{l,n}$, 
we have ${A}_{l,n}\subset \tilde{A}_{l,n}$ and ${A}_{l',n}\subset \tilde{A}_{l',n}$.  
In addition, $\tilde{A}_{l',n}$ is independent of $\tilde{A}_{l,n}$ for $(l,l')\in J_{n,2}$. 
Thus,
\begin{align*}
E[1_{A_{l,n}}1_{A_{l',n}}]
\le 
E[1_{\tilde{A}_{l,n}}1_{\tilde{A}_{l',n}}]
=E[1_{\tilde{A}_{l,n}}]E[1_{\tilde{A}_{l',n}}],
\end{align*}   
and so by (\ref{hj}), 
\begin{align*}
\sum_{(l,l')  \in J_{n,2}}
E[1_{A_{l,n}}1_{A_{l',n}}]
= &\sharp J_{n,2}\times
C(\exp((h-1)n^2))^2\\
\le &C\exp(2hn^2-4n)\times \frac{1}{n^{10}}.
\end{align*} 
Therefore, we obtain (\ref{o1}).
\end{proof}
\begin{proof}[Proof of Lemma \ref{hh} for $d\ge3$]
%We claim that for $d\ge3$ we can give simpler proof for the third term of the right hand side of (\ref{formula2}) than $d=2$. (See the detail in Remark \ref{bb}.) 
We estimate the last sum appearing in (\ref{formula2}). To this end, set
\begin{align}\label{f2}
A'_{l,n}:=&\{ S_l\in   R(l-1)^c \cap\partial_b R(l+\lceil \frac{u_{n-1}}{n^{10}} \rceil)\} \cap E_{l,n}\\
\label{f6}
A''_{l',n}:=&\{S_{l'}\in R[l'-\lceil \frac{u_{n-1}}{n^{10}} \rceil,l'-1]^c\cap \partial_b R[l'-\lceil \frac{u_{n-1}}{n^{10}} \rceil,u_{n}] \} \cap E_{l',n}.
\end{align}
Note that 
\begin{align*}
A'_{l,n}=F_{0,l,l+\lceil \frac{u_{n-1}}{n^{10}} \rceil,n},\quad
A''_{l',n}=F_{l'-\lceil \frac{u_{n-1}}{n^{10}} \rceil,l',u_n,n}.
\end{align*}
By Lemma \ref{subs}, (\ref{bb0}) and (\ref{bb2}),  
we can estimate $P(A'_{l,n})$ and $P(A''_{l',n})$ as follows: 
for any $l \in I_n$ and all sufficiently large $n\in \mathbb{N}$ with 
$ u_{n-1}/n^{11} \le  (\lceil u_{n-1}/n^{10}\rceil-k\lceil\beta n^2\rceil) \wedge l$ 
\begin{align}\notag
&P(A'_{l,n})=P(F_{0,l,l+\lceil \frac{u_{n-1}}{n^{10}} \rceil,n} )\\
\notag
\le &P(T_0\wedge T_b>l )
\times  P(T_0<T_b \wedge k )^{\lceil\beta n^2\rceil}
\times P(T_b> \frac{u_{n-1}}{(n-1)^{11}}  )\\
\label{dd}
\le&
P(T_b=\infty)P(T_0\wedge T_b=\infty)
P(T_0<T_b \wedge k  )^{\lceil\beta n^2\rceil}
+O(\exp((h-1)n^2-cn^2))
\end{align}
and for any $l' \in I_n$ and all sufficiently large $n\in \mathbb{N}$ with $u_n/n^{11}\le u_n-(l'+k\lceil\beta n^2\rceil)$ and $u_{n-1}/n^{11} \le\lceil u_{n-1}/n^{10}\rceil\le l'$
\begin{align}\notag
 &P(A''_{l',n})=P(F_{l'-\lceil \frac{u_{n-1}}{n^{10}} \rceil,l',u_n,n} )\\
\notag
\le &P(T_0\wedge T_b>\lceil \frac{u_{n-1}}{n^{10}} \rceil )
\times  P(T_0<T_b \wedge k)^{\lceil\beta n^2\rceil}
\times P(T_b> \frac{u_{n}}{n^{11}} )\\
\label{ddd}
\le &P(T_b=\infty)P(T_0\wedge T_b=\infty)
P(T_0<T_b \wedge k )^{\lceil\beta n^2\rceil}
+O(\exp((h-1)n^2-cn^2)).
\end{align} 
Therefore, by (\ref{ju**}), (\ref{dd}) and (\ref{ddd}), we obtain 
\begin{align}\notag
&\sum_{(l,l')  \in J_{n,3}}
(E[1_{A_{l,n}}1_{A_{l',n}}]-E[1_{A_{l,n}}]E[1_{A_{l',n}}])\\
\notag
\le &\sum_{(l,l')  \in J_{n,3}}
(E[1_{A'_{l,n}}]E[1_{A''_{l',n}}]-E[1_{A_{l,n}}]E[1_{A_{l',n}}])\\
\notag
\le &\sum_{(l,l') \in J_{n,3}}
(P(T_b=\infty)^2P(T_0\wedge T_b=\infty)^2P(T_0<T_b \wedge k )^{2\lceil\beta n^2\rceil}\\
\notag
&-P(T_b=\infty)^2P(T_0\wedge T_b=\infty)^2P(T_0<T_b \wedge k )^{2\lceil\beta n^2\rceil}\\
\notag
&+ O(\exp(2(h-1)n^2-cn^2) ))\\
\label{pp3}
\le &C(\exp(hn^2-2n))^2\times\exp(-cn^2).
\end{align}
By (\ref{oo}), (\ref{o1}) and (\ref{pp3}),  
the right hand side of (\ref{formula2}) is bounded by 
$\exp(2hn^2-4n)\times 1/n^{10}$. 
This completes the proof of Lemma \ref{hh} for $d\ge3$. 
\end{proof}

\subsection{Proof of Lemmas \ref{hh+} and \ref{hh} for $d=2$}\label{e2}
First, we state a lemma that is important for our proof of Lemma \ref{hh}. 
\begin{lem}
There exists $C>0$ such that for any $n\in \mathbb{N}$ and $x\in {\mathbb{Z}^{2}}$  
with $0<|x|< n\sqrt{u_{n-1}}$
 \begin{align}\label{fo3*}
P(T_0\wedge T_x> \lceil \frac{u_n}{n} \rceil) \le \frac{\pi}{(n+1)^2}+C\frac{\log n}{n^4}.
\end{align}
Moreover, it holds that for $b \in{\cal N}(0)$
 \begin{align}\label{fo3}
P(T_b\wedge T_{x+b}> \lceil \frac{u_n}{n} \rceil) \le \frac{\pi}{(n+1)^2}+C\frac{\log n}{n^4}.
\end{align}
\end{lem}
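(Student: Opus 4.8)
The plan is to trade the temporal constraint $\{T_0\wedge T_x>t\}$, where $t:=\lceil u_n/n\rceil$, for a spatial one and then run a two‑point potential‑theoretic estimate. Let $a(\cdot)$ denote the potential kernel of the planar walk, normalized so that $a(0)=0$, $a$ is even, $\tfrac{1}{2d}\sum_{z\in\mathcal N(w)}a(z)=a(w)+\mathbf{1}_{\{w=0\}}$, and $a(z)=\tfrac{2}{\pi}\log|z|+\kappa+O(|z|^{-2})$ as $|z|\to\infty$. Write $A=\{0,x\}$, $\rho=|x|$, and let $\xi_R$ be the exit time of the disc of radius $R$ about the origin. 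First I would fix $R$ by $R^2=t/\omega_n$ with $\omega_n=(\log n)^2$ and use
\begin{align*}
P(T_0\wedge T_x>t)\le P(\xi_R<T_A)+P(\xi_R>t),
\end{align*}
valid because on $\{T_A>t,\ \xi_R\le t\}$ one has $\xi_R<T_A$. The standard exit estimate $P(\xi_R>t)\le C\exp(-ct/R^2)=C\exp(-c\omega_n)$ makes the second term $o(\log n/n^4)$, hence negligible.

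For the first term I would exploit $g(z):=a(z)+a(z-x)$, which is harmonic on $\mathbb Z^2\setminus A$ and, because $a(\pm x)=a(x)$ and $a(0)=0$, equals $a(x)$ \emph{exactly} on $A$. Optional stopping of the martingale $g(S_{\cdot\wedge T_A})$ at $\xi_R\wedge T_A$ from a start $y$ gives
\begin{align*}
g(y)-a(x)=E^y\big[(g(S_{\xi_R})-a(x))\,;\,\xi_R<T_A\big]\ge (m_R-a(x))\,P^y(\xi_R<T_A),
\end{align*}
where $m_R=\min_{R\le|z|\le R+1}g(z)$, so $P^y(\xi_R<T_A)\le (g(y)-a(x))/(m_R-a(x))$. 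Since the walk starts at $0\in A$, I would decompose on the first step and average over $y\in\mathcal N(0)$. The mean value property then telescopes the numerator: $\sum_{y\in\mathcal N(0)}a(y)=2d$ and $\sum_{y\in\mathcal N(0)}a(y-x)=2d\,a(x)$ (using $x\neq0$), whence $\sum_{y\in\mathcal N(0)}(g(y)-a(x))=2d$, \emph{uniformly in $x$}. Combining this with $m_R-a(x)\ge \tfrac{2}{\pi}\log(R^2/\rho)-C$ (from the asymptotics of $a$ and the $O(1)$ overshoot at radius $R$) yields
\begin{align*}
P(T_0\wedge T_x>t)\le \frac{1}{m_R-a(x)}+o\Big(\frac{\log n}{n^4}\Big)\le \frac{\pi}{2\log(R^2/\rho)-C'}+o\Big(\frac{\log n}{n^4}\Big).
\end{align*}

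This bound increases in $\rho$, so the worst case is $\rho=n\sqrt{u_{n-1}}$. With $\log R^2=\log(t/\omega_n)=n^2-O(\log n)$ and $2\log\rho=(n-1)^2+2\log n$ one finds $2\log(R^2/\rho)=(n+1)^2-O(\log n)$, giving $P(T_0\wedge T_x>t)\le \pi/(n+1)^2+C\log n/n^4$, i.e.\ (\ref{fo3*}); the constant $C$ also covers the finitely many small $n$.

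For (\ref{fo3}) I would translate by $-b$, writing $P(T_b\wedge T_{x+b}>t)=P^{-b}(T_0\wedge T_x>t)$, and rerun the argument from the single start $-b\in\mathcal N(0)$. The one change is the numerator $g(-b)-a(x)=1+(a(x+b)-a(x))$, which no longer collapses to $1$ because the starting site lies off $A$; this is the main obstacle, since keeping the leading constant equal to $\pi$ (not $2\pi$) forces me to control $a(x+b)-a(x)$. I would use that $\sup_{z\neq0}(a(z+b)-a(z))<1$ (a finite check from the explicit values $a(e_1)=1$, $a(e_1+e_2)=4/\pi$, $a(2e_1)=4-8/\pi$) together with the gradient estimate $a(z+b)-a(z)=O(1/|z|)$: the first fact handles small $\rho$, where the denominator is nearly $2n^2$, and the second handles large $\rho$, where this increment is negligible. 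Thus across the whole range $1\le\rho<n\sqrt{u_{n-1}}$ the product $(1+a(x+b)-a(x))\,(n+1)^2$ stays below $2\log(R^2/\rho)-C'$ up to an additive $O(\log n)$ — exactly the slack afforded by the $C\log n/n^4$ term. The remaining work is bookkeeping: checking that the shift $\log\omega_n$, the overshoot, and the $\kappa$ and $O(|z|^{-2})$ remainders all feed only into the $O(\log n/n^4)$ error.
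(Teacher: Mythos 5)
Your proposal is correct, but it takes a genuinely different route from the paper's. For (\ref{fo3*}) the paper never touches the potential kernel: it decomposes the constant $1$ according to the last visit to $\{0,x\}$ before time $\lceil u_n/n\rceil$ (the identity (\ref{r1})), inserts the local CLT lower bound $P(S_k=z)\ge \frac{2}{\pi k}e^{-|z|^2/k}-\frac{c}{k^2}$, pulls out the common factor $\gamma(n)=P(T_0\wedge T_x>\lceil u_n/n\rceil)$ by monotonicity, and rearranges $1\ge \pi^{-1}\bigl((n+1)^2-5\log n-c\bigr)\gamma(n)$. You instead trade the time horizon for a spatial one ($R^2=t/(\log n)^2$, with the exit-time tail absorbed into the error) and run optional stopping on $g=a(\cdot)+a(\cdot-x)$; your telescoping identity $\sum_{y\in\mathcal{N}(0)}(g(y)-a(x))=2d$ is the exact counterpart of the paper's identity (\ref{r1}), and both routes produce the constant $\pi$ with the same $O(\log n/n^4)$ error, the extremal case being $|x|$ near $n\sqrt{u_{n-1}}$. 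The paper's method is more elementary (only the LCLT); what yours buys is the second claim. The paper dismisses (\ref{fo3}) as following ``by a similar observation as in (\ref{kkk})'', but the symmetry that proves (\ref{kkk}) for a single point does not extend to pairs: first-step decomposition gives $P(T_0\wedge T_x>t+1)=\frac{1}{2d}\sum_{b'\in\mathcal{N}(0)}P(T_{-b'}\wedge T_{x-b'}>t)$, and the $2d$ translated pairs are not images of one another under lattice isometries fixing the origin, so this observation alone yields only the useless bound with an extra factor $2d$. In fact $P(T_b\wedge T_{x+b}>t)$ genuinely exceeds $P(T_0\wedge T_x>t+1)$, asymptotically by the factor $1+a(x+b)-a(x)$, which is $4-8/\pi\approx 1.45$ at $x=b$; the inequality (\ref{fo3}) survives only because this inflation happens when $|x|$ is small, where the two-point avoidance probability is near $\pi/(2n^2)$ rather than $\pi/(n+1)^2$. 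Your dichotomy --- increment bounded away from $1$ for $|x|$ up to polynomial size (where the denominator is close to $2n^2/\pi$), increment $O(1/|x|)$ beyond (where it feeds into the $C\log n/n^4$ slack) --- is exactly the bookkeeping needed to make this rigorous, so on the second claim your argument is more complete than the paper's one-line reduction.
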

\begin{proof}
We prove only the first claim since the second one follows from (\ref{fo3*}) by a similar observation as in (\ref{kkk}).  
Decomposing the whole event by means of the last exit time from $\{0,x\}$ by time $\lceil u_n/n\rceil$, we obtain
\begin{align}\notag
1=&\sum_{k=0}^{ \lceil u_n/n\rceil}P(S_{k}=0)P^0(0,x \notin R[1,\lceil \frac{u_n}{n} \rceil-k])\\
\label{r1}
+&\sum_{k=0}^{\lceil u_n/n \rceil}P(S_{k}=x)P^x(0,x \notin R[1,\lceil \frac{u_n}{n} \rceil-k]).
\end{align}
By  the local central limit theorem (see Theorem $1.2.1$ in \cite{Law}), there exists 
$c>0$ such that for any $k\in \mathbb{N}\cup \{0\}$, $x\in {\mathbb{Z}^2}$
\begin{align*}
\begin{cases}
&\displaystyle{P(S_k=x) \ge \frac{2}{\pi k} \exp(-\frac{|x|^2}{k})-\frac{c}{k^2}}\quad \quad\text{ if } k \rightleftharpoons x\\
&\displaystyle{P(S_k=x) =0} \quad \quad \quad \quad \quad \quad \quad \quad\text{ if } k+1 \rightleftharpoons x,
\end{cases}
\end{align*}
where $k\rightleftharpoons x=(x_1,x_2) (\in {\mathbb{Z}^2})$ means that $k+x_1+x_2$ is even.  
Let 
\begin{align*}
\gamma(n)=P(T_0\wedge T_x>\lceil \frac{u_n}{n}\rceil)
=P(0,x \notin R[1,\lceil \frac{u_n}{n} \rceil]).
\end{align*}
By the invariance property of $S_n$ under an isomorphism of ${\mathbb{Z}^2}$, 
for $a\le \lceil u_n/n\rceil$ 
$$\gamma(n) \le P^0(0,x \notin R[1,a])
=P^0(-x,0 \notin R[1,a])
=P^x(0,x \notin R[1,a]).$$ 
Note that 
$\sum_{k=1}^{\lceil  u_n/n \rceil}\frac{2}{\pi k} 1_{\{k \rightleftharpoons x\}}
=\sum_{m=1}^{\lceil  u_n/n \rceil/2}\frac{1}{\pi m}>\frac{1}{ \pi} \log (1+\lceil  u_n/n \rceil/2)$ holds. 
Then, by (\ref{r1}) we obtain  
\begin{align*}
1\ge &\bigg(\sum_{k=1}^{\lceil u_n/n \rceil}
(\frac{2}{\pi k} -\frac{c}{k^2})1_{\{k \rightleftharpoons 0\}}
+\sum_{k=1}^{\lceil u_n/n\rceil}
\bigg(\frac{2}{\pi k} \exp(-\frac{|x|^2}{k})-\frac{c}{k^2}\bigg)
1_{\{k \rightleftharpoons x\}}
\bigg)\gamma(n)\\
\ge &\bigg(\frac{n^2}{\pi}-\frac{\log n}{\pi}-c
+\sum_{k=\lceil n|x|^2\rceil}^{\lceil u_n/n \rceil}\frac{2}{\pi k} \exp(-\frac{|x|^2}{k})1_{\{k \rightleftharpoons x\}}
\bigg)\gamma(n)\\
\ge &\bigg(\frac{n^2}{\pi}-\frac{\log n}{\pi}-c
+\sum_{k=\lceil n|x|^2\rceil}^{\lceil u_n/n \rceil}\frac{2}{\pi k} \exp(-\frac{1}{n})1_{\{k \rightleftharpoons x\}}
\bigg)\gamma(n)\\
\ge &\bigg(\frac{n^2}{\pi}-\frac{\log n}{\pi}-c
+\sum_{k=n^3 u_{n-1}}^{\lceil  u_n/n \rceil}\frac{2}{\pi k} \exp(-\frac{1}{n})1_{\{k \rightleftharpoons x\}}
\bigg)\gamma(n)\\
\ge &\frac{1}{\pi}\bigg(n^2-\log n-c
+n^2-(n-1)^2-4\log n -(1-\exp(-\frac{1}{n}))2n\bigg)
\gamma(n)\\
\ge &\frac{1}{\pi}\bigg((n+1)^2-5\log n -c\bigg)
\gamma(n).
\end{align*}
Thus the assertion follows from an easy rearrangement.
\end{proof}

To prove Lemma \ref{hh+}, we first introduce the following lemma. 
\begin{lem}
There exist constants $C$, $c>0$ such that for any $n\in \mathbb{N}$ and $l\in I_n$
\begin{align}
\label{lu}
P(A_{l,n})&\le \frac{C\exp((h-1)n^2)}{n^4},\\
\label{lu*}
P(A_{l,n})&\ge \frac{c\exp((h-1)n^2)}{n^4}.
\end{align}
In addition, for any $n\in \mathbb{N}\cap \{1\}^c$ and $l\in I_n$
\begin{align}
\label{q1}
P(A_{l,n})= \frac{\pi^2 P(T_0<T_b \wedge k )^{\lceil\beta n^2\rceil} }{2n^2(n-1)^2}+O(\frac{\exp((h-1)n^2)\log n}{n^6}).
\end{align}
\end{lem}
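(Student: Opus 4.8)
The plan is to feed the sandwich supplied by Corollary \ref{al} into two separate escape‑probability estimates for the two‑dimensional walk, one for a single site and one for the two neighbouring sites $\{0,b\}$, each evaluated at the time scale that actually occurs. By (\ref{t4*}) the middle factor $P(T_0<T_b\wedge k)^{\lceil\beta n^2\rceil}$ is already trapped between $c\exp((h-1)n^2)$ and $C\exp((h-1)n^2)$, so it only has to be carried along multiplicatively. Thus the whole content reduces to estimating $P(T_0\wedge T_b>l)$ for $l\in I_n$ and $P(T_b>m)$ for $m=u_n$ and $m=\lceil u_n/n^{11}\rceil$, which are the three factors appearing in (\ref{hy*}) and (\ref{hy}). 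I would then multiply and read off (\ref{q1}), with (\ref{lu}) and (\ref{lu*}) following as immediate consequences.

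For the single‑site factor I would use (\ref{kkk}) to write $P(T_b>m)=P(T_0>m+1)$ and set $\rho(j):=P(T_0>j)=P(0\notin R[1,j])$. The last‑exit decomposition at the origin gives the exact identity $1=\sum_{k=0}^{j}P(S_k=0)\rho(j-k)$; since the local central limit theorem (Theorem $1.2.1$ in \cite{Law}) yields $P(S_{2m}=0)=\frac{1}{\pi m}+O(m^{-2})$, summing over the even $k\le j$ gives $\sum_{k=0}^{j}P(S_k=0)=\frac{1}{\pi}\log j+O(1)$. Monotonicity of $\rho$ makes $1\ge\rho(j)\sum_kP(S_k=0)$, whence the upper bound $\rho(j)\le\frac{\pi}{\log j}+O((\log j)^{-2})$; for the matching lower bound I would write $1=\rho(j)\sum_kP(S_k=0)+\sum_kP(S_k=0)\big(\rho(j-k)-\rho(j)\big)$ and control the nonnegative defect by splitting $k\le j/2$ and $k>j/2$, using the already‑proved upper bound on $\rho$ to estimate $\sum_{i\le j/2}\rho(i)=O(j/\log j)$. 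This yields $\rho(j)=\frac{\pi}{\log j}+O((\log j)^{-2})$. Inserting $j=u_n+1$ (so $\log j=n^2+O(e^{-n^2})$) and $j=\lceil u_n/n^{11}\rceil+1$ (so $\log j=n^2-11\log n+O(1)$) gives, in both cases, $P(T_b>m)=\frac{\pi}{n^2}+O\!\big(\frac{\log n}{n^4}\big)$, the $\log n$ being produced precisely by the $n^{11}$ in the upper‑bound time.

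The genuinely new estimate is the two‑site factor $\gamma(j):=P(T_0\wedge T_b>j)$. A translation by $-b$ followed by the reflection $y\mapsto-y$ shows that escape from $\{0,b\}$ started at $b$ has the same probability as $\gamma(j)$, so the last‑exit decomposition at the set $\{0,b\}$ reads $1=\sum_{k=0}^{l}\big(P(S_k=0)+P(S_k=b)\big)\gamma(l-k)$. The crucial parity observation is that for each $k\ge1$ exactly one of $S_k=0$, $S_k=b$ is possible, and the nonzero term equals $\frac{2}{\pi k}+O(k^{-2})$; hence the combined weight is $g_k=\frac{2}{\pi k}+O(k^{-2})$ and $\sum_{k=1}^{l}g_k=\frac{2}{\pi}\log l+O(1)$, exactly twice the single‑site total. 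This doubling is what generates the factor $\tfrac12$. Running the same monotone upper bound and renewal‑defect lower bound as in the previous paragraph gives $\gamma(l)=\frac{\pi}{2\log l}+O((\log l)^{-2})$, and since $\log l=(n-1)^2+O(\log n)$ uniformly for $l\in I_n$, this reads $P(T_0\wedge T_b>l)=\frac{\pi}{2(n-1)^2}+O\!\big(\frac{\log n}{n^4}\big)$.

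Finally I would combine the three factors. The product of the main terms is $\frac{\pi}{2(n-1)^2}\cdot\frac{\pi}{n^2}\cdot P(T_0<T_b\wedge k)^{\lceil\beta n^2\rceil}=\frac{\pi^2\,P(T_0<T_b\wedge k)^{\lceil\beta n^2\rceil}}{2n^2(n-1)^2}$, while each cross term is of the form $O(n^{-2})\cdot O(\log n\,n^{-4})\cdot O(\exp((h-1)n^2))=O\!\big(\exp((h-1)n^2)\log n\,/\,n^{6}\big)$; since the upper bound (\ref{hy*}) and the lower bound (\ref{hy}) differ only in $P(T_b>\cdot)$, and both evaluations agree to this precision, this establishes (\ref{q1}). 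Then (\ref{lu}) and (\ref{lu*}) follow because the main term has exact order $\exp((h-1)n^2)/n^4$ by (\ref{t4*}) while the error is of strictly smaller order. I expect the main obstacle to be the lower bounds in the two escape estimates, namely bounding the renewal defect $\sum_k g_k(\gamma(l-k)-\gamma(l))$ uniformly and checking that its contribution is only $O((\log l)^{-2})$, together with verifying that the factor $\tfrac12$ emerges cleanly and that all estimates hold uniformly across the wide range $l\in I_n$.
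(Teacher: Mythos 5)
Your proposal has the same skeleton as the paper's proof: reduce everything to the sandwich of Corollary \ref{al}, carry the factor $P(T_0<T_b\wedge k)^{\lceil\beta n^2\rceil}$ along via (\ref{t4*}), and show that the two remaining factors equal $\frac{\pi}{2(n-1)^2}+O(\log n/n^4)$ and $\frac{\pi}{n^2}+O(\log n/n^4)$ uniformly over the relevant time scales, so that (\ref{q1}) follows by multiplication and (\ref{lu}), (\ref{lu*}) are immediate consequences. Where you genuinely differ is in how those hitting-time estimates are obtained. The paper does not prove them: it quotes $P(T_0>n)=\pi/\log n+O((\log n)^{-2})$ from $(2.5)$ of \cite{dvo1} to get (\ref{fo1}) via (\ref{kkk}), and for the two-site estimate (\ref{fo2}) it cites Lemma $3.3$ of \cite{okada}, asserting that the error term follows ``by modifying the proof in a straightforward way,'' and omits the derivation. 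You instead prove both estimates from scratch by last-exit decomposition -- which is in fact the very technique the paper itself uses for the neighboring lemma (\ref{fo3*})/(\ref{fo3}). Your parity observation (for each $k$ exactly one of $S_k=0$, $S_k=b$ has the right parity, so the combined weight $P(S_k=0)+P(S_k=b)$ is $\frac{2}{\pi k}+O(k^{-2})$ for \emph{every} $k$, twice the single-site density) is precisely the mechanism that produces the factor $\tfrac12$ in (\ref{fo2}). So your route makes the lemma self-contained where the paper leans on citations, at the cost of having to carry out the renewal arguments in full.

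The one step of your plan that does not go through as written is the lower bound in the renewal argument, which you yourself flag as the main obstacle. With only the already-proved upper bound on $\rho$, the block $k\le j/2$ of the defect cannot be controlled: one gets $\sum_{k\le j/2}P(S_k=0)\bigl(\rho(j-k)-\rho(j)\bigr)\le\bigl(\rho(j/2)-\rho(j)\bigr)\cdot\bigl(\tfrac1\pi\log j+O(1)\bigr)$, and since all you know is $\rho(j/2)-\rho(j)\le\rho(j/2)=O(1/\log j)$, this only yields $O(1)$, which is useless; showing the defect is $O(1/\log j)$ this way would require the second-order information (equivalently a first-return tail bound such as $P(T_0=m)=O(1/(m(\log m)^2))$) that you are in the middle of proving, so the argument is circular. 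The standard repair, due to Dvoretzky and Erd\H{o}s and implicit in the references the paper cites, is not to subtract $\rho(j)$ at all: for $k\le j/2$ bound $\rho(j-k)\le\rho(j/2)$, bound the block $j/2<k\le j$ by $\frac{C}{j}\sum_{i\le j/2}\rho(i)=O(1/\log j)$ exactly as you do, and then solve the resulting inequality $1\le\rho(j/2)\bigl(\tfrac1\pi\log j+O(1)\bigr)+O(1/\log j)$ for $\rho(j/2)$; renaming $j/2$ as $j$ costs only an additive constant inside the logarithm, which is absorbed in the $O((\log j)^{-2})$ error. The identical substitution fixes the two-site quantity $\gamma$. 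With this one modification your argument is complete and yields the lemma as stated.
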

\begin{proof}
Since (\ref{t4*}) and (\ref{q1}) yield (\ref{lu}) and (\ref{lu*}), 
we only prove (\ref{q1}). 
First we introduce the following estimates: for any $M>0$ there exists $C>0$ such that for any $n\in \mathbb{N}$
\begin{align}
\label{fo1}
\frac{\pi}{n^2}-C\frac{\log n}{n^4} \le P(T_b>\frac{u_n}{n^M})\le \frac{\pi}{n^2}+C\frac{\log n}{n^4},\\
\label{fo2}
\frac{\pi}{2n^2}-C\frac{\log n}{n^4} \le P(T_0\wedge T_b> \frac{u_n}{n^M})\le \frac{\pi}{2n^2}+C\frac{\log n}{n^4}.
\end{align}
Since we know  
$$P(T_0> n)=\frac{\pi}{\log n}+O\bigg(\frac{1}{(\log n)^2}\bigg)$$
(see $(2.5)$ in \cite{dvo1}), 
the assertion (\ref{fo1}) follows by a simple calculation of (\ref{kkk}). 
For the latter assertion, we already know a weaker estimate of (\ref{fo2}) involving only the leading term. 
(See Lemma $3.3$ in \cite{okada}.) 
We can obtain the error term of (\ref{fo2}) by modifying the proof in \cite{okada} 
along the argument in \cite{dvo1} in a straightforward way. 
Thus, we omit the proofs of (\ref{fo1}) and (\ref{fo2}).
From (\ref{fo1}) and (\ref{fo2}), 
we already have estimates of each term in (\ref{hy*}) and (\ref{hy}). 
Indeed, for any $l\in I_n$ and all sufficiently large $n\in \mathbb{N}$, 
(\ref{fo1}) yields
\begin{align}\label{rw}
&P(T_b>\frac{u_{n}}{n^{11}} )=\frac{\pi}{n^2}+O(\frac{\log n}{n^4}),\\
\notag
&P(T_b> u_n)=\frac{\pi}{n^2}+O(\frac{\log n}{n^4}).
\end{align}
Since $l \in I_n$, (\ref{fo2}) implies
\begin{align}\label{n1}
P(T_0 \wedge T_b >l)
=\frac{\pi}{2(n-1)^2}+O(\frac{\log n}{n^4}).
\end{align}
Therefore, by substituting these estimates for the right hand sides of (\ref{hy*}) and (\ref{hy}) 
we obtain the desired formula. 
\end{proof}
\begin{proof}[Proof of Lemma \ref{hh+} for $d=2$]
Recall (\ref{lo}).  
Then, (\ref{yy}) and (\ref{lu*}) yield
\begin{align*}
EQ_n \ge \sum_{l\in I_n}
\frac{ c \exp((h-1)n^2) }{n^4}
\ge \frac{c\exp(hn^2-2n)}{n^4}
\end{align*}
for any $n\in \mathbb{N}$, as desired. 
\end{proof}

\begin{proof}[Proof of Lemma \ref{hh} for $d=2$]
By the same argument for $d\ge 3$, we obtain (\ref{formula2}) for $d=2$. 
We consider the estimate of the right hand side of (\ref{formula2}). 
The first term and the second term of  the right hand side of (\ref{formula2}) are already estimated by Lemma \ref{ku}. 
(Note Remark \ref{gh}.) 
To estimate the third term, 
we will give a uniform upper bound of $P(A_{l,n}\cap A_{l',n})$ for $(l,l')\in J_{n,3}$. 
Here, uniform means that the bound is independent of the choice of $(l,l')\in J_{n,3}$. 
Instead of using $A''_{l',n}$ in (\ref{f6}) as we did when $d\ge 3$, 
we use more complicated events. 
%To estimate the third term of  the right hand side of (\ref{formula2}), we denote the event $0<|S_{l'}-S_l|<  n e^{\frac{(n-1)^2}{2}}$ by $D_{l,l',n}$, 
%and the event $|S_{l'}-S_l|\ge n \exp(\frac{(n-1)^2}{2})$ by $\tilde{D}_{l,l',n}$. 
%Recall definition of (\ref{f2}). 
%For alternative to $A''_{l,n}$ let 
Let
\begin{align*}
A'''_{l,l',n}:=A''_{l',n}\cap \{S_l\in \partial_b R[T_{S_{l'},l'}^{\lceil\beta n^2\rceil} ,u_{n}]\}.
\end{align*} 
Recall the definition of $A'_{l,n}$ in (\ref{f6}).  
By the definition of $A_{l,n}$ and ${A'}_{l,n}$, 
we have ${A}_{l,n}\subset A'_{l,n}$ and ${A}_{l,n}\cap A_{l',n}\subset A'''_{l,l',n}$. 
Note that $A'_{l,n}$ is not independent of $A'''_{l,l',n}$ for $(l,l')\in J_{n,3}$. 
Denote the event $0<|S_{l'}-S_l|<n \sqrt{u_{n-1}}$ by $D_1$, 
and the event $|S_{l'}-S_l|\ge n\sqrt{u_{n-1}}$ by $D_2$. 
Since $S_{l'}\notin R(l'-1)$ on $A_{l',n}$, 
we have $\{S_l=S_{l'}\} \cap A_{l',n}=\emptyset$ for $l<l'$. 
Thus, $A_{l',n}=(A_{l',n}\cap D_1)\cup(A_{l',n}\cap D_2)$ and therefore  
$A_{l,n} \cap A_{l',n}\subset 
(A'_{l,n} \cap A'''_{l,l',n}\cap D_1) 
\cup (\tilde{A}_{l,n} \cap \tilde{A}_{l',n} \cap D_2)$ holds. 
Then, the  following holds:
\begin{align}\label{se1}
E[1_{A_{l,n}}1_{A_{l',n}}]
\le
(E[1_{A'_{l,n}}1_{A'''_{l,l',n}}1_{D_1}]
+E[1_{\tilde{A}_{l,n}}1_{\tilde{A}_{l',n}}1_{D_2}]).
\end{align}
Hence, by putting (\ref{rw}) and (\ref{n1}) into the right hand side of the inequalities given in  Lemma \ref{subs} we can see that there exists $C>0$ 
such that for any $ l\in I_n$ and all sufficiently large $n\in \mathbb{N}$ with 
$u_{n-1}/(n-1)^{11}\le (\lceil u_{n-1}/n^{10}\rceil - k\lceil \beta n^2\rceil) \wedge l $
\begin{align}\notag
 &P(A'_{l,n})=P(F_{0,l,l+\lceil \frac{u_{n-1}}{n^{10}} \rceil,n})\\
\notag
\le &P(T_0 \wedge T_b >l )
\times P(T_0<T_b\wedge k)^{\lceil \beta n^2\rceil}
\times
P(T_b> \frac{u_{n-1}}{(n-1)^{11}})\\
\label{q2+}
\le& \frac{\pi }{2(n-1)^2}
\times P(T_0<T_b\wedge k)^{\lceil \beta n^2\rceil}
\times \frac{\pi }{(n-1)^2}
+C \frac{\exp((h-1)n^2)\log n}{n^6}.
\end{align}
Taking tha conditional probability of the event $A'_{l,n}\cap A'''_{l,l',n} \cap D_1$ on ${\cal F}(T_{S_{l'},l'}^{\lceil\beta n^2\rceil})$
% expectation with respect to ${\cal F}(T_{S_{l'},l'}^{\lceil\beta n^2\rceil})$.  %and then with respect to ${\cal F}(T_{0}^{\lceil \beta n^2\rceil})$ as in the derivation of (\ref{t2}). 
%Moreover, we apply it to (\ref{fo1}) and (\ref{fo2}) for making computation as in the estimation (\ref{t4}). 
and using  (\ref{q2+}), we see that for any  $ l,l' \in I_n$,
\begin{align}\notag
&E[1_{A'_{l,n}}1_{A'''_{l,l',n}}1_{D_1}]\\
\notag
=&E[1_{A'_{l,n}} 1_{D_1}  
1\{\{S_{l'}\notin R[l'-\lceil \frac{u_{n-1}}{n^{10}} \rceil,l'-1],\\
\notag
&S_{l'} \in \partial _b R[l'-\lceil \frac{u_{n-1}}{n^{10}} \rceil,T_{S_{l'},l'}^{\lceil\beta n^2\rceil}]\}\cap E_{l'n}\}\\
\label{eee}
&P(S_{l'},S_l\in\partial_b R[T_{S_{l'},l'}^{\lceil\beta n^2\rceil},u_{n}]
|{\cal F}(T_{S_{l'},l'}^{\lceil \beta n^2\rceil}))].
\end{align}
Note that $T_{x',l}^{\lceil\beta n^2\rceil} <u_{n-1}$ if $(l,l') \in J_{n,3}$. 
Hence, by (\ref{fo3}), we see that 
for all sufficiently large $n\in \mathbb{N}$ 
with $u_{n}-u_{n-1}\ge u_n/n$ and $x,x' \in {\mathbb{Z}^2}$ with $0<|x-x'|< n \sqrt{u_{n-1}}$, 
it holds that
\begin{align*}
&P(x,x'\in \partial_b R[T_{x',l}^{\lceil\beta n^2\rceil},u_{n}]
|{\cal F}(T_{x',l}^{\lceil \beta n^2\rceil }))\\
=&P(0, x-x' \in \partial_b R(u_{n}-t))|_{t=T_{x',l}^{\lceil\beta n^2\rceil}}\\
&\le \max_{0<|x-x'|< n \sqrt{u_{n-1}} }
P(T_b\wedge T_{x-x'+b}>\lceil  \frac{u_n}{n}\rceil ) 
\le \frac{\pi }{(n+1)^2}+\frac{C\log n}{n^4}.
\end{align*}
By the inequalities in the last line restricted to $x=S_l$ and $x'=S_{l'}$, 
the right hand side of (\ref{eee}) is bounded by 
\begin{align}\notag
&E[1_{A'_{l,n}}
1 \{ \{S_{l'}\in R[l'-\lceil \frac{u_{n-1}}{n^{10}} \rceil,l'-1]^c
\cap \partial_b R[l'-\lceil \frac{u_{n-1}}{n^{10}} \rceil,T_{S_{l'},l'}^{\lceil \beta n^2\rceil}\}\cap E_{l',n}\} ]\\
\label{cl}
&\times \bigg(\frac{\pi }{(n+1)^2}+\frac{C\log n}{n^4}\bigg).
\end{align}
Moreover, it holds that
\begin{align}\notag
&E[1_{A'_{l,n}}1\{ \{S_{l'}\in R[l'-\lceil \frac{u_{n-1}}{n^{10}} \rceil ,l'-1]^c
\cap \partial_b  R[l'-\lceil \frac{u_{n-1}}{n^{10}} \rceil,T_{S_{l'},l'}^{\lceil \beta n^2\rceil}]\}\cap E_{l',n}\}] \\
\notag
=&E[1_{A'_{l,n}}] E[1\{ \{S_{l'}\in R[l'-\lceil \frac{u_{n-1}}{n^{10}} \rceil ,l'-1]^c
\cap \partial_b R[l'-\lceil \frac{u_{n-1}}{n^{10}} \rceil,T_{S_{l'},l'}^{\lceil \beta n^2\rceil}]\}\cap E_{l',n}\}]\\
\label{cl*}
=&E[1_{A'_{l,n}}] 
E[1F(l'-\lceil \frac{u_{n-1}}{n^{10}} \rceil,l',T_{S_{l'},l'}^{\lceil \beta n^2\rceil},n )].
\end{align}
By substituting (\ref{ppp*}) for Remark \ref{pl}, we obtain 
\begin{align*}
&P(F(l'-\lceil \frac{u_{n-1}}{n^{10}} \rceil,l',T_{S_{l'},l'}^{\lceil \beta n^2\rceil},n ))\\
\le &\frac{\pi P(T_0<T_b \wedge k )^{\lceil\beta n^2\rceil}}{2(n-1)^2}
+C\frac{ \exp((h-1)n^2) \log n}{n^{4}}.
\end{align*}
Therefore, (\ref{q2+}), (\ref{cl}) and (\ref{cl*}) yield
\begin{align}\notag
&E[1_{A'_{l,n}}1_{A'''_{l,l',n}}1_{D_1}]\\
\label{smp1}
\le &\frac{\pi^4P(T_0<T_b \wedge k )^{2\lceil\beta n^2\rceil}}{4(n-1)^6(n+1)^2}
+C\frac{ \exp(2(h-1)n^2) \log n}{n^{10}}.
\end{align}

Now, we turn to the estimate of $E[1_{\tilde{A}_{l,n}}1_{\tilde{A}_{l',n}}1_{D_2}]$. 
From the large deviation result (see $(11)$ in \cite{Law3}), 
there exist $C$, $c>0$ such that 
for any $n$, $m\in\mathbb{N} \cap \{1\}^c$ with $m \le u_{n-1}$
\begin{align}\label{hhy}
P(|S_m|\ge n \sqrt{u_{n-1}})
\le Ce^{-cn}.
\end{align}
Thus, by the strong Markov property, we can estimate 
$E[1_{\tilde{A}_{l,n}}1_{\tilde{A}_{l',n}}1_{D_2}]$ 
for any $(l,l') \in J_{n,3}$ as
\begin{align}\notag
E[1_{\tilde{A}_{l,n}}1_{\tilde{A}_{l',n}}1_{D_2}]
= &E[1_{\tilde{A}_{l,n}}1_{D_2}]
E[1_{\tilde{A}_{l',n}}]\\
\notag
=&E[1_{\tilde{A}_{l,n}}
E[1_{D_2}|{\cal F}(T_{S_{l},l}^{\lceil \beta n^2\rceil })]]
E[1_{\tilde{A}_{l',n}}]\\
\notag
=&E[1_{\tilde{A}_{l,n}}
P(|S_{l'-l-t}|\ge n \sqrt{u_{n-1}})_{t=T_{S_{l}, l}^{\lceil \beta n^2\rceil} }]
E[1_{\tilde{A}_{l',n}}]\\
\notag
\le&E[1_{\tilde{A}_{l,n}}
\max_{|l-l'|\le u_{n-1}}P(|S_{l'-l}|\ge n \sqrt{u_{n-1}})]
E[1_{\tilde{A}_{l',n}}]\\
\notag
=&E[1_{\tilde{A}_{l,n}}]\max_{m\le u_{n-1}}
P(|S_{m}|\ge n \sqrt{u_{n-1}})
E[1_{\tilde{A}_{l',n}}]\\
\label{smp}
\le &C\frac{\exp(2(h-1)n^2)}{e^{cn}}.
\end{align}
The last inequality comes from (\ref{hj}) and (\ref{hhy}). 
Finally, by (\ref{q1}), (\ref{smp1}) and (\ref{smp}),  
we obtain the following estimate. 
Since $\sharp J_{n,3}\le (u_{n-1})^2$, for any $n\in {\mathbb N}\cap \{0\}^c$, 
\begin{align}\notag
&\sum_{(l,l')  \in J_{n,3}}
(E[1_{A'_{l,n}}1_{A'''_{l,l',n}}1_{D_1}]
+E[1_{\tilde{A}_{l,n}}1_{\tilde{A}_{l',n}}1_{D_2}]
-E[1_{A_{l,n}}]E[1_{A_{l',n}}])\\
\notag
\le &\sum_{(l,l') \in J_{n,3}}
 \bigg(\frac{\pi^4 P(T_0<T_b \wedge k)^{2\lceil\beta n^2\rceil}}{4(n-1)^6(n+1)^2}-
 \frac{\pi^4 P(T_0<T_b \wedge k )^{2\lceil\beta n^2\rceil}}{4(n-1)^4n^4}\\
\notag
 &+C\frac{\exp(2(h-1)n^2)}{e^{cn}}+C\frac{\exp(2(h-1)n^2)\log n}{n^{10}}
\bigg)\\
\notag
\le &C\sum_{(l,l')\in J_{n,3}}
\frac{\exp(2(h-1)n^2)\log n}{n^{10}}\\
\label{pp3+1}
\le&C\bigg(\frac{\exp(hn^2-2n)}{n^4}\bigg)^2\times \frac{\log n}{n^2}.
\end{align}
The second inequality comes from the fact that there exists $C>0$ such that for any $n\in \mathbb{N}\cap \{1\}^c$
\begin{align}\label{nnn}
\frac{1}{(n-1)^6(n+1)^2}-\frac{1}{(n-1)^4n^4}\le \frac{C}{n^{10}}. 
\end{align}
By (\ref{oo}), (\ref{o1}) and (\ref{pp3+1}),   
the right hand side of (\ref{formula2}) is bounded by 
$(\exp(hn^2-2n)/n^4)^2 \times \log n/n^2$. 
Therefore, we obtain the proof of Lemma \ref{hh} for $d=2$. 
\end{proof}

\begin{rem}\label{bb}
We observe what happens if we try to estimate the third term of the right hand side of 
(\ref{formula2}) 
in the case 
$d=2$ by the same argument as in the case $d\ge3$. 
Recall the definition of $A''_{l',n}$ in (\ref{f6}). 
Then, by substituting (\ref{ppp*}) for Lemma \ref{subs}, we can see that 
for any $ l'\in I_n$, 
$$P(A''_{l',n})\le \frac{\pi^2 P(T_0<T_b \wedge k )^{\lceil\beta n^2\rceil}}{2(n-1)^2n^2}+O(\frac{\exp((h-1)n^2)\log n}{n^{6}}).$$ 
Hence, if we choose $A''_{l',n}$ instead of $A'''_{l,l',n}$ in (\ref{se1}), 
\begin{align*}
E[1_{A'_{l,n}}1_{A''_{l',n}}]
=&E[1_{A'_{l,n}}]E[1_{A''_{l',n}}]\\
\le &\frac{\pi^4 P(T_0<T_b \wedge k)^{2\lceil\beta n^2\rceil}}{4(n-1)^6n^2}+O(\frac{\exp(2(h-1)n^2)\log n}{n^{10}}).
\end{align*}
Based on this estimate, we obtain  
\begin{align*}
&\sum_{(l,l')  \in J_{n,3}}
(E[1_{A'_{l,n}}1_{A''_{l',n}}]
-E[1_{A_{l,n}}]E[1_{A_{l',n}}])\\
\le &\sum_{(l,l') \in J_{n,3}}
 \bigg(\frac{\pi^4 P(T_0<T_b \wedge k )^{2\lceil\beta n^2\rceil}}{4(n-1)^6n^2}-
 \frac{\pi^4 P(T_0<T_b \wedge k)^{2\lceil\beta n^2\rceil}}{4(n-1)^4n^4}\\
&+O(\frac{\exp(2(h-1)n^2)\log n}{n^{10}})\bigg)\\
\le &C\sum_{(l,l') \in J_{n,3}}
\frac{\exp(2(h-1)n^2)}{n^9}\\
\le&C(\frac{\exp(hn^2-2n)}{n^4})^2\times \frac{1}{n}.
\end{align*}
The second inequality comes from the fact that there exists $C>0$ such that for any $n\in \mathbb{N}\cap \{1\}^c$
\begin{align*}
\frac{1}{(n-1)^6n^2}-\frac{1}{(n-1)^4n^4}\le \frac{C}{n^9}. 
\end{align*}
(Compare with (\ref{nnn}).) 
Consequently, with the aid of Lemma \ref{hh+}, we obtain
$$\frac{\mathrm{Var} (Q_n)}{c^2 (EQ_n)^2}\le \frac{C}{n}.$$
This estimate is not sufficient to apply the Borel-Cantelli lemma as we did in the proof of Theorem \ref{m1} in Section $4.1$. 
Thus, we cannot use the Borel-Cantelli lemma here.
\end{rem}

%%%%%%%%%%%%%%%%%%%%%%%%%%%%%%%%%%%%%%%%%%%%%%%%%%%%%%%%%%%%%%%%%%
%%%%%%%%%%%%%%%%%%%%%%%%%%%%%%%%%%%%%%%%%%%%%%%%%%%%%%%%%%%%%%%%%%
\section{Proof of Theorem \ref{m2}}
\subsection{Proof of the upper bound of Theorem \ref{m2} for $d\ge2$}
\begin{proof}
Note that if we substitute $\beta_d \delta$ for $\beta$ in Lemma \ref{theta},  we obtain $E[\tilde{\Theta}_n(\beta_d \delta)]=O(n^{1-\delta})$. 
By the Chebyshev inequality, we find that for any $\epsilon>0$, there exists $C>0$
\begin{align*}
P\bigg(\tilde{\Theta}_n(\beta_d\delta)\ge \bigg(\frac{n}{2} \bigg)^{1-\delta+\epsilon}\bigg)
<Cn^{-\epsilon}.
\end{align*}
Using the Borel-Cantelli lemma, we see that the events 
$\{\tilde{\Theta}_{2^k}(\beta_d\delta)\ge 2^{(k-1)(1-\delta+\epsilon)}\}$ 
happen only finitely often with probability one. 
Hence, it holds that for any $\epsilon>0$
\begin{align}\label{hhhj}
\limsup_{k\to \infty } \frac{\log \tilde{\Theta}_{2^k}(\beta_d\delta)}{\log 2^{k-1} }
\le 1-\delta+\epsilon \quad { a.s.}
\end{align}
Note that if $K(n,x)\ge \lceil \beta_d \delta \log n \rceil $, 
 for all $k$, $n\in {\mathbb N}$ with $2^{k-1}\le n<2^k$ 
$T_x^{\lceil \beta_d \delta \log 2^{k-1} \rceil } \le n$ and $K(2^k,x)\ge \lceil \beta_d \delta \log 2^{k-1} \rceil$,  
and hence $ {\Theta}_n(\delta) \le \tilde{\Theta}_{2^k}(\beta_d\delta)$  holds. 
%$\Theta_n(\delta) \le \tilde{\Theta}_{2^k}(\beta_d\delta)$ holds. 
Thus, for all $k$, $n\in {\mathbb N}$ with $2^{k-1}\le n<2^k$, we have
$$ \frac{\log  {\Theta}_n(\delta)}{\log n}
\le \frac{\log \tilde{\Theta}_{ 2^k }(\beta_d\delta)}{\log  2^{k-1} }.$$ 
Therefore, with (\ref{hhhj}) we obtain for any $\epsilon>0$, 
\begin{align*}
\limsup_{n\to \infty } \frac{\log  {\Theta}_n(\delta)}{\log n}
\le 1-\delta+\epsilon \quad { a.s.}
\end{align*}
The desired upper bound holds by combining these bounds.  
\end{proof}

\subsection{Proof of the lower bound of Theorem \ref{m2} for $d\ge2$}
\begin{proof}
We closely follow the argument in the proof of Lemma $4.2$ with $\beta=\beta_d \delta$. 
Take $k$ and $h_k$ as in section $4.1$. 
Set 
\begin{align*} 
W_n(\beta_d\delta)=\sharp \{ x\in \partial R(u_n) \cap R(u_{n-1}):
K(u_{n-1},x)\ge \lceil \beta_d\delta n^2\rceil \}.
\end{align*}
Note that $W_n(\beta_d\delta) \ge Q_n$ holds for any $n\in {\mathbb N}$. 
Indeed, if $x \in \partial_b R(u_n)$, then $x \in \partial R(u_n)$. 
Moreover, if $l\in I_n$, then $l+k\lceil\beta_d\delta n^2\rceil\le u_{n-1}$ holds for all sufficiently large $n\in {\mathbb N}$. 
Therefore, since we know (\ref{rrr}) and Lemma \ref{hh+}, we have
\begin{align*}
P(W_n(\beta_d\delta) \ge \frac{1}{2}EQ_n \ge \frac{c\exp(h_kn^2-2n)}{n^4}
 \quad\quad  \text{ for all but finitely many }n)=1.
\end{align*}
Let $u_{m-1}\le n < u_m$. 
Note that $K(u_{m-1,x})\le K(n,x)$ holds for all $x \in \mathbb{Z}^d$ 
and by virtue of (\ref{el*}), $\partial R(u_m) \cap R(u_{m-1}) \in \partial R(n)$. 
Hence, $W_m(\beta_d\delta) \le \Theta_n(\delta)$ holds. 
Therefore, it holds that
\begin{align*}
\liminf_{n \to \infty} \frac{\log \Theta_n(\delta)}{\log n}
\ge h_k
\quad \text{ a.s.}
\end{align*}
Since $h_k\to 1-(\beta_d\delta)/\beta_d$ as $k \to\infty$, the desired result holds, completing the proof. 
\end{proof}
%%%%%%%%%%%%%%%%%%%%%%%%%%%%%%%%%%%%%%%%%%%%%%%%%%%%%%%%%%%%%%%%%%
%%%%%%%%%%%%%%%%%%%%%%%%%%%%%%%%%%%%%%%%%%%%%%%%%%%%%%%%%%%%%%%%%%
%\section*{Reference}

%\end{thebibliography}
\end{document}